\title{A small-gain theorem for nonlinear stochastic systems with inputs and outputs I: Additive white noise\thanks{This work was supported by the National Natural Science Foundation of China (NSFC) under
Grants No.11371252 and No.11501369; Research and Innovation Project of Shanghai Education Committee under Grant No.14zz120; Yangfan Program of Shanghai (14YF1409100); Chen Guang Project(14CG43) of Shanghai Municipal Education Commission, Shanghai Education Development Foundation; the Research Program of Shanghai Normal University (SK201403) and Shanghai Gaofeng Project for University Academic Program Development.}}
\author{Jifa Jiang\thanks{Department of Mathematics, Shanghai Normal
University, Shanghai 200234, PR China ({\tt jiangjf@shnu.edu.cn}).}
        \and Xiang Lv\thanks{Corresponding author. Department of Mathematics, Shanghai Normal
University, Shanghai 200234, PR China ({\tt lvxiang@shnu.edu.cn}).}}
\begin{document}

\maketitle

\begin{abstract}
This paper studies a small-gain theorem for nonlinear stochastic equations driven by additive white noise in both trajectories and stationary distribution. Motivated by the most recent work of Marcondes de Freitas and Sontag \cite{FS3}, we firstly define the {\it ``input-to-state characteristic operator"} $\mathcal{K}(u)$ of the system in a suitably chosen input space via backward It\^{o} integral, and then for a given output function $h$, define the $``gain\ operator"$  as the composition of output function $h$ and the input-to-state characteristic operator $\mathcal{K}(u)$ on the input space. Suppose that the output function is either order-preserving or anti-order-preserving in the usual vector order and the global Lipschitz constant of the output function is less than the absolute of the negative principal eigenvalue of linear matrix. Then we prove the so-called {\it ``small-gain theorem"}: the gain operator has a unique fixed point, the image for input-to-state characteristic operator at the fixed point is a globally attracting stochastic equilibrium for the random dynamical system generated by the stochastic system. Under the same assumption for the relation between the Lipschitz constant of the output function and maximal real part of stable linear matrix, we prove that the stochastic system has a unique stationary distribution, which is regarded as a stationary distribution version of small-gain theorem. These results can be applied to stochastic cooperative, competitive and predator-prey systems, or even others.
\end{abstract}

\begin{keywords}
stochastic control systems, input and output,  small-gain theorem, random dynamical systems, stochastic equilibrium, global stability
\end{keywords}

\begin{AMS}
93E03, 93E15, 93C10, 60H10, 37H10
\end{AMS}

\pagestyle{myheadings}
\thispagestyle{plain}
%\markboth{TEX PRODUCTION AND V. A. U. THORS}{SIAM MACRO EXAMPLES}

\section{Introduction}
The present paper is concerned with the classical small-gain theorem, which was first proved by George Zames \cite{Z} in 1966.
It has been widely used as a powerful tool to investigate the robust stability of interconnected control systems. More precisely, in \cite{Z},
a sufficient condition for input-output stability of a feedback loop is that the system gain is smaller than one. It is noticed that in the original research, most of them discuss systems with linear finite-gain from input to output. For the case of nonlinear gain functions, it was first considered by Hill \cite{H} and Mareels and Hill \cite{MH}, where the notions of monotone gain were proposed and a nonlinear version of small-gain theorem was developed. Since then, more and more researchers have focused on extending the small-gain theorem to nonlinear feedback systems. In \cite{S}, Sontag introduced the concept of input-to-state stability, which was developed by Jiang, Teel and Praly in \cite{JTP} and a nonlinear ISS-type small-gain theorem was obtained. Inspired by the excellent works \cite{JTP,S}, many other nonlinear small-gain theorems have been extensively and intensively studied, see \cite{AA,ALS,DRW,ES,JM,JMW,LN,MH,NT,S,SI,ST}, which led to new applications in the design and analysis of nonlinear control systems \cite{KD,KKK} and various systems in mathematical biology for the robust stability of the feedback connection.

So far there has been a well-developed theory of the feedback analysis for deterministic systems. However, several of small-gain theorems mentioned above are highly undesirable in the application of interconnections and stabilization for more realistic models with noise disturbances, which may arise from its surrounding environmental perturbations, measurement errors, or intrinsic uncertainties of a coupling system due to high complexity. Due to the limitations of the deterministic control theory, it is natural to investigate the stochastic nonlinear control, which has been a research topic in recent years, see \cite{DK1,DK2,DK3,PB}. Recently, Marcondes de Freitas and Sontag \cite{FS1,FS2,FS3} have initiated on the study of random dynamical systems with inputs and outputs whose parameters are perturbed by so-called {\it real noise}, which generates a metric dynamical system  if it is stationary and helps one to solve random differential equations pathwisely (see \cite[p.57]{A}). Their approach  is divided into two steps: the first is for a given $``stationary"$ input $u$, to define the so-called {\it ``input-to-state characteristic operator"}  $\mathcal{K}(u)$; and the second is for a given output function, to define $``gain\ operator"$  $\mathcal{K}^{h}(u)$ by the composition of the output function and the input-to-state characteristic operator $\mathcal{K}(u)$ on the space of stationary inputs. Then they transformed the problem on obtaining a small-gain theorem into the existence of a fixed point for the gain operator. If it does exist in a manner of the Banach fixed point theorem, then the image for input-to-state characteristic operator $\mathcal{K}(\cdot)$ at the fixed point is a random equilibrium, which is globally asymptotically stable for  interconnected random systems. Such a small-gain theorem can be applied random competitive systems, which is new as far as the authors know.

It is well-known that control of stochastic differential equations is a classical field, which cannot be pathwise differential equations (see \cite[p.68]{A}). Motivated by the work of Marcondes de Freitas and Sontag \cite{FS3}, the main purpose of this paper is to carry out Marcondes de Freitas and Sontag's idea developed in random dynamical systems with inputs and outputs accompanying real noise parameters in stochastic control of interconnected control systems so that we can provide small-gain results for nonlinear stochastic systems driven by additive white noise in both trajectories and stationary distribution.  Firstly, we consider linear stochastic system driven by additive white noise with inputs and define the {\it``input-to-state characteristic operator"} $\mathcal{K}(u)$ of the system in a suitably chosen input space via backward It\^{o} integral. For a given output function $h$, we then define the $``gain\ operator"$ as the composition of output function $h$ and the input-to-state characteristic operator $\mathcal{K}(u)$ on the input space. Suppose that the output function is either order-preserving or anti-order-preserving in the usual vector order and the global Lipschitz constant of the output function is less than the absolute of the negative principal eigenvalue of linear matrix. Then we prove the so-called {\it``small-gain theorem"}: the gain operator has a unique fixed point $u$  by the Banach fixed point theorem and  input-to-state characteristic operator $\mathcal{K}(u)$ at the fixed point $u$ is a unique, globally attracting stochastic equilibrium for the random dynamical system generated by the stochastic system. Under the same assumption for the relation between the Lipschitz constant of the output function and maximal real part of stable linear matrix, we prove that the stochastic system has a unique stationary distribution by the Khasminskii theorem \cite{KHAS}, which may be regarded as a stationary distribution version of small-gain theorem.  Our result can be applied to stochastic cooperative, competitive and predator-prey systems or even others with additive white noise to obtain a globally stable stochastic equilibrium, which, illustrated in Examples 5.1-5.3, are new to us.  What we will process is much more concise than Marcondes de Freitas and Sontag's in \cite{FS3}.

The rest of this paper is organized as follows. In Section 2, we will formulate the discussed problem, review some preliminary concepts and definitions, introduce some notations and known results needed in the subsequent content, and define the input-to-state characteristic operator. Section 3 describes the asymptotic behaviour of stochastic solutions, establishes some auxiliary lemmas and presents the definition of gain operator and its properties. In Section 4, a stochastic small-gain theorem is proved and the global convergence to a unique stochastic equilibrium is presented. Section 5 presents three examples to utilize our results to stochastic cooperative, competitive and predator-prey systems, respectively. In Section 6, we summarize the small-gain theorem in trajectories, discuss to prove a small-gain theorem in stationary solution or measure.

\section{Formulation of the problem and preliminaries}
We start with a biochemical model, which contains three chemical species $X_1$, $X_2$ and $X_3$ interacted with each other as shown in Figure 2.1. In the study of molecular biology, biochemical reaction
$$
\xymatrix{& {X_1} \ar[dr]^{}& \\
     {X_3} \ar[ur]^{}&   & {X_2} \ar[ll]}$$
\vspace{-2em}
\begin{figure}[ht]
\caption{Biochemical circuit. The symbol $``X_i\rightarrow X_j"$ means that species $X_i$ represses the production of species $X_j$, $i,j=1,2,3.$}
\end{figure}

\noindent
systems are usually modeled by ordinary or partial differential equations. Furthermore, we notice that the nonlinear dynamics of biochemical reaction networks on the order of a cell is stochastic. Let us consider a nonlinear stochastic control system as shown in Figure 2.1, which consists of three elements in a feedback loop as follows:
\[dx_i=[a_ix_i+h_i(x_{i-1})]dt+\sigma_idW_t^i,\qquad i=1,2,3,\]
where $W_t(\omega)=\left(W_t^1(\omega),W_t^2(\omega),W_t^3(\omega)\right)$ is a three dimensional standard Brownian motion with $W_0^i(\omega)=0,\ i=1,2,3$, $\omega\in\Omega$. Here and below, the indices of $x$ are  taken modulo 3. This type of system can be modeled in the study of finance, statistics, engineering and multi-agent systems, where $a_1$, $a_2$ and $a_3$ are negative constants and $h_1$, $h_2$ and $h_3$ are decreasing functions of feedback.

\[\xrightarrow{\;u_1\;}X_1\xrightarrow{h_1(x_1)}\qquad \xrightarrow{\;u_2\;}X_2\xrightarrow{h_2(x_2)}\qquad \xrightarrow{\;u_3\;}X_3\xrightarrow{h_3(x_3)}\]
\vspace{-2em}
\begin{figure}[ht]
\caption{Decomposition of the feedback loop into input-output modules. In each partition, $u_i$ indicates the input
 into the element $X_i$ and $h_i(x_i)$ indicates the subsequent output---feedback of the current state.}
\end{figure}

Motivated by \cite{FS3}, we can open up the closed loop, rewriting the model as a stochastic system with inputs
\[dx_i=[a_ix_i+u_i]dt+\sigma_idW_t^i,\qquad i=1,2,3,\]
together with outputs
\[u_i(t)=h_i(x_{i-1}(t)),\qquad i=1,2,3.\]

\noindent
In view of the above analysis, we can rewrite the form of stochastic solutions by the variation-of-constants formula of stochastic differential equations, which is very important in the present work.

In general, we consider the following nonlinear stochastic system in this paper
\begin{equation}
dX_t=[AX_t+h(X_t)]dt+\sigma dW_t,\label{problem}
\end{equation}
where $W_t(\omega)=\left(W_t^1(\omega),\ldots,W_t^m(\omega)\right)$ is a two-sided time Wiener process
with values in $\mathbb{R}^m$ on the
canonical Wiener space $(\Omega,\mathscr{F},\mathbb{P})$, i.e., $\mathscr{F}$ is Borel $\sigma$-algebra of
$\Omega=C_0(\mathbb{R},\mathbb{R}^m)=\{\omega(t)\ \mbox{continuous},\ \omega(0)=0,\ t\in\mathbb{R}\}$, and let $\mathbb{P}$ be the
Wiener measure, $A=(a_{ij})_{d\times d}$ is a $d\times d$-dimensional matrix, $h:\mathbb{R}^d\rightarrow\mathbb{R}^d_+$
and $\sigma=(\sigma_{ij})_{d\times m}$ is a $d\times m$-dimensional matrix.

Let us first present the assumption for linear matrix $A$. Consider the corresponding linear ordinary differential equations:
\begin{equation}
dX_t=AX_tdt.
\label{eq1}
\end{equation}
Denote by $\Phi_j(t)=(\Phi_{1j}(t),\ldots,\Phi_{dj}(t))^T$ the solution of equations (\ref{eq1}) with initial
value $x(0)=e_j$, $j=1,\ldots,d$. Define the $d\times d$ matrix
$$\Phi(t)=(\Phi_1(t),\ldots,\Phi_d(t))=(\Phi_{ij}(t))_{d\times d}.$$
Then $\Phi(t)$ is the fundamental matrix of equations (\ref{eq1}) and $\Phi(t+s)=\Phi(t)\circ\Phi(s)$, $\forall t, s\geq0$. Assume now that $A$ is stable in the sense that all real parts of its eigenvalues are negative:
\begin{equation}\label{Astable}
{\rm Re}\ \mu \leq\lambda<0\quad{\rm for\ all\ eigenvalues}\ \mu \ {\rm of} \ A.
\end{equation}
It follows from \cite[Chap.2, Proposition 2.10]{Palis} that there is a basis of $\mathbb{R}^d$ such that the norm $\mid\cdot\mid$ satisfies that
$$<Ax,x>\ \leq \ \lambda\mid x\mid^2\quad{\rm for\ all}\ x\in \mathbb{R}^d\ {\rm and}$$
$$\|\Phi(t)\|\leq e^{\lambda t},\ t\geq0.$$

Now we propose the assumptions on $A$ as follows.

\begin{enumerate}[({A})]
\item
$A$ is {\it cooperative} in the sense that
$a_{ij}\geq0$ for all $i,j\in\{1,\ldots,d\}$ and $i\neq j$, which is stable such that (\ref{Astable}) and
\begin{equation}\label{Fun}
\|\Phi(t)\|:=\max\{|\Phi_{ij}(t)|:i,j=1,\ldots,d\}\leq{\rm e}^{\lambda t},\ t\geq0
\end{equation}
 hold.
\end{enumerate}
In this paper, we use the norm $|x|:= \max\{|x_i|: i=1,\ldots,d\}$.

Concerning the existence and uniqueness of stochastic solutions and the stability of SDEs (\ref{problem}), we make the following hypotheses:

\begin{enumerate}[({H}$_1$)]
\item
\label{isrdec1}$h\in C^1_b(\mathbb{R}^d,\mathbb{R}^d_+)$, i.e., the function $h$ and its derivatives are both bounded, and $h$ is order-preserving, i.e.,
    \[ h(x_1)\leq_{\mathbb{R}^d_+}h(x_2),\ {\rm whenever}\ x_1\leq_{\mathbb{R}^d_+}x_2\quad \]
or anti-order-preserving, i.e.,
\[ h(x_1)\geq_{\mathbb{R}^d_+}h(x_2),\ {\rm whenever}\ x_1\leq_{\mathbb{R}^d_+}x_2\quad. \]
Here, $x\leq_{\mathbb{R}^d_+}y$ means that $y-x\in\mathbb{R}^d_+$, $\forall x, y\in\mathbb{R}^d$.
\item
\label{isrdec2}Let $L=\max\{\sup\limits_{x\in\mathbb{R}^d}|\frac{\partial h_i(x)}{\partial x_j}|, i,j=1,\ldots,d\}$ such that
$-\frac{Ld^2}{\lambda}<1$.
\end{enumerate}

\noindent In cellular neural networks, $h$ can be regarded as input-output sigmoid characteristics with identical neurons, see \cite{CWK,HJ}.
By the fundamental theory of SDEs \cite{M,O}, we can obtain the existence and uniqueness of solutions for equations (\ref{problem}) with the initial value $x(0)=x_0$ $\in\mathbb{R}^d$.

Let $\varphi(t,\omega)x=x(t,\omega,x)$ be the unique solution of equations (\ref{problem}). Then using the variation-of-constants formula \cite[Theorem 3.1]{M}, we have
\begin{eqnarray}\label{eq2}
\varphi(t,\omega)x &=& \Phi(t)x+\Phi(t)\int_0^t\Phi^{-1}(s)h(\varphi(s,\omega)x)ds+\Phi(t)\int_0^t\Phi^{-1}(s)\sigma dW_s \nonumber\\
&=& \Phi(t)x+\int_0^t\Phi(t-s)h(\varphi(s,\omega)x)ds+\int_0^t\Phi(t-s)\sigma dW_s,\quad \mathbb{P}\mbox{-a.s.},\ t\geq0.
\end{eqnarray}
Before stating our main results, we will introduce some basic concepts and notations related to the theory of random dynamical systems, which can be found in \cite{A,C} for more details.

Let $(\Omega,
\mathscr{F},\mathbb{P})$ be a probability space and $(X,\varrho)$ be a separable complete metric space equipped with the Borel $\sigma$-algebra $\mathscr{B}(X)$.

\begin{definition}
A family of transformations $\{\theta_t:\Omega\mapsto\Omega,t\in\mathbb{R}\}$ with a probability space $(\Omega,
\mathscr{F},\mathbb{P})$ is
called a metric dynamical system if
\begin{remunerate}
\item it is one-parameter group, i.e.,
\[\theta_0=id,\quad \theta_t\circ\theta_s=\theta_{t+s}\quad for\ all\ t,s\in\mathbb{R};\]
\item $(t,\omega)\mapsto\theta_t\omega$ is $(\mathscr{B}(\mathbb{R})\otimes\mathscr{F}, \mathscr{F})$-measurable;
\item $\theta_t\mathbb{P}=\mathbb{P}$ for all $t\in\mathbb{R}$, i.e.,
$\mathbb{P}(\theta_tB)=\mathbb{P}(B)$ for all $B\in\mathscr{F}$ and
all $t\in\mathbb{R}$.
\end{remunerate}
\end{definition}

\begin{definition}
A random dynamical system (RDS) with the state space $X$ over a metric dynamical
system $\theta\equiv(\Omega,\mathscr{F},\mathbb{P},\{\theta_t,t\in\mathbb{R}\})$ is a $(\mathscr{B}(\mathbb{R}_+)\otimes\mathscr{F}\otimes\mathscr{B}(X),
\mathscr{B}(X))$-measurable mapping
\[\varphi:\mathbb{R}_+\times\Omega\times X\mapsto X, \quad (t,\omega,x)\mapsto\varphi(t,\omega,x),\]
such that
\begin{remunerate}
\item$\varphi(t,\omega,\cdot): X \to X$ is continuous for all $t\in
\mathbb{R}_+$ and $\omega\in\Omega$;
\item the mappings $\varphi(t,\omega):=\varphi(t,\omega,\cdot)$ form a cocycle over $\theta$
\[\varphi(0,\omega)=id,\quad \varphi(t+s,\omega)=\varphi(t,\theta_s\omega)\circ\varphi(s,\omega)\]
for all $t,s\in\mathbb{R}_+$ and $\omega\in\Omega$. Here
$\circ$ means composition of mappings.
\end{remunerate}
\end{definition}

\begin{definition}
A set-valued mapping $D:\Omega\to
2^X\backslash\{\varnothing\}$ is called a random set if for every $x\in X$, the mapping
$\omega\to {\rm dist}_X(x,D(\omega))$ is measurable, where ${\rm dist}_X(x,B)$ is the
distance in $X$ between the element $x$ and the set $B\subset X$. A random set $D$ is called a random closed (resp. compact) set if $D(\omega)$ is closed (resp. compact) in $X$ for each $\omega\in\Omega$. A random set $D$ is said to be bounded if there exist $x_0\in X$ and a random variable $r(\omega)>0$ such that
\[D(\omega)\subset\{x\in X: \varrho(x,x_0)\leq r(\omega)\}\ for\ all\ \omega\in\Omega.\]
\end{definition}

Combining the theory of stochastic differential equations and random dynamical systems, we can see that the solution of equations (\ref{problem}) generates an RDS $(\theta,\varphi)$  in $\mathbb{R}^d$, see \cite[Chap. 2]{A,C}, where $\theta$ is connected
with the Wiener process, i.e., $\theta_t\omega(\cdot):=\omega(t+\cdot)-\omega(t)$. Let $B^+:=\{B_t^+|B_t^+=W_t:t\geq0\}$ and $B^-:=\{B_t^-|B_t^-=W_{-t}:t\geq0\}$. Then $B^+$ and $B^-$ are two independent Brownian motions with one-sided time $\mathbb{R}_+$, which can be used to define the forward and backward It\^{o} integral (see Arnold \cite[p.97]{A}) as follows.

For a continuous adapted process $f$ of finite variation and $t>0$, the {\it forward It\^{o} integral} is defined by
$$\int_0^tf(s)d^+W_s:=\lim_{\Delta\rightarrow 0}{\rm in\ Pr.}\sum_{k=0}^{n-1}f(t_k)(B^+_{t_{k+1}}-B^+_{t_{k}})$$
for any partition $\Delta=\{0=t_0<t_1<\cdots t_n=t\}$; and for $t<0$, the {\it backward It\^{o} integral} is defined by
$$\int_t^0f(s)d^-W_s:=\lim_{\Delta\rightarrow 0}{\rm in\ Pr.}\sum_{k=0}^{n-1}f(t_{k+1})(B^-_{-t_{k+1}}-B^-_{-t_{k}})$$
for any partition $\Delta=\{t=t_0<t_1<\cdots t_n=0\}$.

By definitions, we can prove that
\begin{equation}\label{formula1}
\int_{t}^0f(s)d^-W_s=-\int_0^{-t}f(-s)dB^-_s\quad{\rm for\ any}\ t<0.
\end{equation}
\begin{equation}\label{formula2}
\int_{-t}^0f(-s)d^-W_s=\int_0^{t}f(t-s)d^+W_s(\theta_{-t}\omega)\quad{\rm for\ any}\ t>0.
\end{equation}

 Now we are in position to introduce the concept of {\it input-to-state characteristic  operator}. By the definition of $\theta$, we can obtain the pull-back trajectories of solutions of (\ref{problem}) as follows:
\begin{equation}
\varphi(t,\theta_{-t}\omega)x=\Phi(t)x+\int_{-t}^0\Phi(-s)h(\varphi(t+s,\theta_{-t}\omega)x)ds
+\int_{-t}^0\Phi(-s)\sigma dW_s,\ \mathbb{P}\mbox{-a.s.},\ x\in\mathbb{R}^d,
\label{eq3}
\end{equation}
where we have used (\ref{formula2}) for stochastic integral term. Returning the output function term to input function, we define the {\it input-to-state characteristic  operator}  $\mathcal{K}$:
\begin{equation}
[\mathcal{K}(u)](\omega)=\int_{-\infty}^0\Phi(-s)u(\theta_s\omega)ds+\int_{-\infty}^0\Phi(-s)\sigma dW_s,\quad\omega\in\Omega,
\label{eq4}
\end{equation}
where $u$ is a tempered random variable with respect to $\theta$, i.e.,
\[\sup_{t\in\mathbb{R}}\left\{e^{-\gamma|t|}\left|u(\theta_t\omega)\right|_2\right\}<\infty \quad {\rm for\ all}\ \omega\in\Omega\ {\rm and}\ \gamma>0,\]
where $|x|_2:=(\sum\limits_{i=1}^d|x_i|^2)^{\frac12}, x\in\mathbb{R}^d.$ In what follows, we denote $\|\Phi\|_2:=(\sum\limits_{i,j=1}^d|\Phi_{ij}|^2)^{\frac12}$, where $\Phi$ is a $d\times d$ dimensional matrix.

\textsc{{\it Remark}} 1. It is noticed that the operator $\mathcal{K}$ is well defined. In fact, for any tempered random variable $u$, since $\|\Phi(t)\|\leq e^{\lambda t}$, $t\geq0$, we have $\|\Phi(t)\|_2\leq d\|\Phi(t)\|\leq d {\rm e}^{\lambda t}$ and so
\begin{eqnarray}\left|\int_{-\infty}^0\Phi(-s)u(\theta_s\omega)ds\right|_2
&\leq&\int_{-\infty}^0\left|\Phi(-s)u(\theta_s\omega)\right|_2ds
\leq d\int_{-\infty}^0{\rm e}^{\lambda|s|}|u(\theta_s\omega)|_2ds\nonumber\\
&\leq&d\sup_{t\in\mathbb{R}}\left\{{\rm e}^{\frac\lambda2|t|}\left|u(\theta_t\omega)\right|_2\right\}
\int_{-\infty}^0e^{\frac\lambda2|s|}ds<\infty, \ \omega\in\Omega,
\nonumber\end{eqnarray}
which implies that $\lim\limits_{t\rightarrow\infty}\int_{-t}^0\Phi(-s)u(\theta_s\omega)ds$ exists for all $\omega\in\Omega$.
For any $t_1>t_2>0$,
\begin{eqnarray}\mathbb{E}\left|\int_{-t_1}^0\Phi(-s)\sigma dW_s-\int_{-t_2}^0\Phi(-s)\sigma dW_s\right|_2^2
&=&\mathbb{E}\int_{-t_1}^{-t_2}\left\|\Phi(-s)\sigma\right\|_2^2ds\nonumber\\
&\leq&\sum_{i=1}^{d}\sum_{j=1}^{m}\sigma_{ij}^2\int_{-t_1}^{-t_2}\|\Phi(-s)\|_2^2ds
\nonumber\end{eqnarray}
which together with {\rm (A)} shows that $\int_{-t}^0\Phi(-s)\sigma dW_s$ converges in $L^2$, as $t\rightarrow\infty$.  By {\rm (\ref{formula1})}, $\int_{-t}^0\Phi(-s)\sigma dW_s$ is a continuous martingale. Hence, it follows from  {\rm \cite[Problem 3.20 in Chap. 1]{KS}} that $\int_{-t}^0\Phi(-s)\sigma dW_s$
converges $\mathbb{P}\mbox{-a.s.}$ to an integrable random variable $X_\infty:=\int_{-\infty}^0\Phi(-s)\sigma dW_s$, as $t\rightarrow\infty$.
Furthermore, by the boundedness of $h$, (see (H$_1$)), we obtain that
$\{\varphi(t,\theta_{-t}\omega)x: t\geq0\}$ is a bounded set for $\mathbb{P}\mbox{-a.s.}$ $\omega\in\Omega$ and $x\in\mathbb{R}^d$.

\section{Asymptotic behaviour of RDS generated by It\^{o} SDEs}
In this section, we will give some preliminary propositions and lemmas to describe the dynamics of pull-back trajectory which will be used in the proof of our main result. To make the paper self-contained, we begin with a known result in \cite{FS3} which provides convenience for reading.

\begin{lemma}[{\rm \cite[Lemma A.2]{FS3}}]\label{lem1}
Suppose that $(x_\alpha)_{\alpha\in A}$ is a net in a normed space $X$, partially
ordered by a solid, normal cone $X_+\subseteq X$. Suppose, in addition, that the net converges
to an element $x_\infty\in X$, and that the infima and suprema
\[x_\alpha^-:=\inf\{x_{\alpha'}: \alpha'\geq\alpha\}\quad and\quad x_\alpha^+:=\sup\{x_{\alpha'}: \alpha'\geq\alpha\}\]
exist for every $\alpha\in A$. Then the nets $(x_\alpha^-)_{\alpha\in A}$ and $(x_\alpha^+)_{\alpha\in A}$ so defined also converge
to $x_\infty$.
\end{lemma}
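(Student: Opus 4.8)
The plan is to use the two hypotheses on the cone $X_+$ for two distinct purposes: \emph{solidity} to convert norm smallness into order sandwiches, and \emph{normality} to convert order sandwiches back into norm smallness; in between, an elementary manipulation of infima and suprema. Throughout, $\le$ denotes the order induced by $X_+$. First I would record two quantitative consequences of the hypotheses. From normality, there is a constant $c>0$ such that $0\le x\le y$ implies $\norm{x}\le c\norm{y}$. From solidity, fix an interior point $e$ of $X_+$; then a small ball about $e$ lies in $X_+$, so there is a constant $\kappa>0$ with the order-sandwich property
\[
-\kappa\norm{w}\,e \ \le\ w \ \le\ \kappa\norm{w}\,e \qquad\text{for every }w\in X,
\]
obtained by noting that $e\pm\delta w\in X_+$ whenever $\delta$ is small relative to $1/\norm{w}$.

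Next I would fix $\e>0$ and use the assumed convergence $x_\alpha\to x_\infty$ to pick $\alpha_0\in A$ with $\norm{x_{\alpha'}-x_\infty}<\e$ for all $\alpha'\ge\alpha_0$. Feeding $w=x_{\alpha'}-x_\infty$ into the order sandwich gives
\[
x_\infty-\kappa\e\,e \ \le\ x_{\alpha'}\ \le\ x_\infty+\kappa\e\,e \qquad\text{for all }\alpha'\ge\alpha_0.
\]
So for any fixed $\alpha\ge\alpha_0$, the element $x_\infty-\kappa\e e$ is a lower bound of the tail $\{x_{\alpha'}:\alpha'\ge\alpha\}$, hence $x_\alpha^-\ge x_\infty-\kappa\e e$; on the other hand $x_\alpha^-\le x_\alpha\le x_\infty+\kappa\e e$, using that $\alpha$ itself belongs to the index set of the infimum. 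Subtracting the lower bound, $0\le x_\alpha^- -(x_\infty-\kappa\e e)\le 2\kappa\e\,e$, so normality yields $\norm{x_\alpha^- -(x_\infty-\kappa\e e)}\le 2c\kappa\e\norm{e}$, and the triangle inequality gives $\norm{x_\alpha^- -x_\infty}\le (2c+1)\kappa\norm{e}\,\e$. Since $\e>0$ was arbitrary, $x_\alpha^-\to x_\infty$.

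The supremum case is symmetric: for $\alpha\ge\alpha_0$ one has $x_\alpha^+\ge x_\alpha\ge x_\infty-\kappa\e e$ and $x_\alpha^+\le x_\infty+\kappa\e e$ (the latter because $x_\infty+\kappa\e e$ is an upper bound of the tail), and the identical order-to-norm estimate gives $\norm{x_\alpha^+-x_\infty}\le(2c+1)\kappa\norm{e}\,\e$, hence $x_\alpha^+\to x_\infty$.

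I do not expect a genuine obstacle here: the content is just the interplay of solidity and normality. The one place to be careful is extracting the order-sandwich constant $\kappa$ from solidity — this is precisely the step that would break if $X_+$ were merely generating rather than solid — and, in the bookkeeping, remembering that $\alpha\ge\alpha$ so that $x_\alpha$ lies among the elements over which the infimum (respectively supremum) is taken, which is what clamps $x_\alpha^\pm$ inside \emph{both} explicit bounds rather than only one of them.
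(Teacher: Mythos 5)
Your argument is correct, and it is the natural route: solidity converts norm closeness into two-sided order bounds via an interior point $e$ of $X_+$, and normality converts the resulting order sandwich $0\le x_\alpha^\pm-(x_\infty-\kappa\varepsilon e)\le 2\kappa\varepsilon e$ back into a norm estimate, with the reflexivity observation ($x_\alpha\in\{x_{\alpha'}:\alpha'\ge\alpha\}$) being exactly what clamps $x_\alpha^\pm$ between the two explicit bounds rather than from just one side. Note, however, that the present paper does not supply a proof of this lemma at all: it is quoted verbatim as Lemma A.2 of Marcondes de Freitas and Sontag \cite{FS3} and invoked as a known fact, so there is no in-paper argument against which to compare the details. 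Your proof stands on its own as a correct, self-contained demonstration of the cited result, and the quantitative constants $(2c+1)\kappa\|e\|$ you extracted (with $c$ the normality constant and $\kappa$ from the interior ball at $e$) make the dependence on the cone data explicit in a way that a reader of this paper, who only sees the citation, would have to reconstruct from \cite{FS3} anyway.
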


\begin{proposition}\label{pro1}
For each $\tau>0$, let
\[a_\tau^h(\omega)=\inf\overline{\{h(\varphi(t,\theta_{-t}\omega)x):t\geq\tau\}}\]
and
\[b_\tau^h(\omega)=\sup\overline{\{h(\varphi(t,\theta_{-t}\omega)x):t\geq\tau\}},\quad x\in\mathbb{R}^d,\ \omega\in\Omega,\]
where {\rm inf} and {\rm sup} mean the greatest lower bound and the least upper bound, respectively.
Then $a_\tau^h(\omega)$ and $b_\tau^h(\omega)$ are random variables with respect to the $\sigma$-algebra $\mathscr{F}$. When $h={\rm id}$, we use the notations $a_\tau^{\rm id}$
and $b_\tau^{\rm id}$.
\end{proposition}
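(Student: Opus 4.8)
The plan is to turn the vector-order infimum and supremum into countably many scalar operations and then to exploit joint measurability and continuity of the pull-back trajectory. First I would record the structural facts. Since $\mathbb{R}^d_+$ is a closed, solid cone, $(\mathbb{R}^d,\leq_{\mathbb{R}^d_+})$ is a lattice in which the greatest lower bound and least upper bound of an order-bounded set are taken coordinatewise, and passing to the closure of a set changes neither (the lower, resp. upper, bounds of a set coincide with those of its closure). By the last assertion of Remark~1 the set $\{\varphi(t,\theta_{-t}\omega)x:t\geq\tau\}$ is bounded for every $\omega\in\Omega$, and by (H$_1$) $h\in C_b^1(\mathbb{R}^d,\mathbb{R}^d_+)$; hence $\{h(\varphi(t,\theta_{-t}\omega)x):t\geq\tau\}$ is a bounded subset of $\mathbb{R}^d_+$, the infimum and supremum defining $a_\tau^h(\omega)$ and $b_\tau^h(\omega)$ exist, and for each coordinate $i\in\{1,\dots,d\}$,
\[
\big(a_\tau^h(\omega)\big)_i=\inf_{t\geq\tau}h_i\big(\varphi(t,\theta_{-t}\omega)x\big),\qquad \big(b_\tau^h(\omega)\big)_i=\sup_{t\geq\tau}h_i\big(\varphi(t,\theta_{-t}\omega)x\big).
\]
It therefore suffices to show that each coordinate on the right-hand side is an $\mathscr{F}$-measurable function of $\omega$.

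Next I would establish the regularity of $t\mapsto\varphi(t,\theta_{-t}\omega)x$. Because the noise is additive, the generated cocycle admits a version that is jointly continuous in $(t,\omega)$ and, for each fixed $t$, $\mathscr{F}$-measurable in $\omega$: setting $Z_t(\omega):=\int_0^t\Phi(t-s)\sigma\,dW_s=\sigma\omega(t)+A\int_0^t\Phi(t-s)\sigma\omega(s)\,ds$ (integration by parts, meaningful for every $\omega\in\Omega=C_0(\mathbb{R},\mathbb{R}^m)$ and jointly continuous in $(t,\omega)$) and solving pathwise the random ODE $\dot V_t=AV_t+h(V_t+Z_t(\omega))$ with $V_0=x$ --- which has a unique global solution depending continuously on $(t,\omega)$, since $h$ is bounded and globally Lipschitz --- one has $\varphi(t,\omega)x=V_t(\omega)+Z_t(\omega)$; see \cite{A}. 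As the Wiener shift $(t,\omega)\mapsto\theta_{-t}\omega$ is continuous, $t\mapsto\varphi(t,\theta_{-t}\omega)x$ is continuous on $[\tau,\infty)$ for every $\omega\in\Omega$, and $\omega\mapsto\varphi(t,\theta_{-t}\omega)x$ is $\mathscr{F}$-measurable for each fixed $t$ because $\varphi(t,\cdot)x$ is $\mathscr{F}$-measurable and $\theta_{-t}$ is $(\mathscr{F},\mathscr{F})$-measurable. The continuity in $t$ can alternatively be read off (\ref{eq3}) after rewriting the stochastic term via (\ref{formula1}) as $-\int_0^t\Phi(s)\sigma\,dB^-_s$ and invoking continuity of the It\^{o} integral in its upper limit.

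Finally, since $h$ is continuous and $t\mapsto\varphi(t,\theta_{-t}\omega)x$ is continuous on $[\tau,\infty)$, for each $i$ the infimum and supremum over $t\geq\tau$ may be restricted to rational $t$:
\[
\big(a_\tau^h(\omega)\big)_i=\inf_{t\in\mathbb{Q}\cap[\tau,\infty)}h_i\big(\varphi(t,\theta_{-t}\omega)x\big),\qquad \big(b_\tau^h(\omega)\big)_i=\sup_{t\in\mathbb{Q}\cap[\tau,\infty)}h_i\big(\varphi(t,\theta_{-t}\omega)x\big).
\]
These exhibit the coordinates of $a_\tau^h$ and $b_\tau^h$ as countable infima and suprema of $\mathscr{F}$-measurable functions, hence $\mathscr{F}$-measurable; thus $a_\tau^h$ and $b_\tau^h$ are random variables, and the special case $h=\mathrm{id}$ gives the statement for $a_\tau^{\mathrm{id}}$ and $b_\tau^{\mathrm{id}}$.

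I expect the only step requiring real care to be the second one: the time variable enters the solution both through the lower endpoint of the backward stochastic integral in (\ref{eq3}) and inside the still-unknown trajectory in the drift integral, so a direct argument is circular. The device that resolves this is precisely the construction behind the RDS $(\theta,\varphi)$ --- splitting off the Ornstein--Uhlenbeck-type process $Z_t$ and solving a pathwise random ODE, together with the identities (\ref{formula1})--(\ref{formula2}) --- which yields joint continuity of $(t,\omega)\mapsto\varphi(t,\theta_{-t}\omega)x$ with no exceptional null set; if one instead keeps a version of $\varphi$ that is only $\mathbb{P}$-a.s. continuous in $t$, one works with the $\mathbb{P}$-completion of $\mathscr{F}$ (or redefines $a_\tau^h$, $b_\tau^h$ on a null set), which is harmless for the subsequent arguments.
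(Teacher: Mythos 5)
Your proof is correct, and it takes a genuinely different route from the paper's. Where you reduce the vector-order $\inf$ and $\sup$ coordinatewise, establish pathwise continuity of $t\mapsto\varphi(t,\theta_{-t}\omega)x$ directly by splitting off the Ornstein--Uhlenbeck-type process $Z_t(\omega)=\sigma\omega(t)+A\int_0^t\Phi(t-s)\sigma\omega(s)\,ds$ and solving a random ODE, and then exploit continuity to restrict $\inf/\sup$ to the countable set of rational times, the paper instead invokes the abstract machinery of Chueshov's monograph: it notes the sets are order-bounded and $\mathbb{R}^d_+$ is strongly minihedral (so $a_\tau^h,b_\tau^h$ exist, Definition 3.1.7 in \cite{C}), cites Remark 1.5.1 in \cite{C} for the joint continuity of $(t,x)\mapsto\varphi(t,\theta_{-t}\omega)x$, applies Proposition 1.3.5 in \cite{C} to conclude that $\omega\mapsto\overline{\gamma_x^\tau(\omega)}$ is a random compact set, and then finishes via Theorem 3.2.1 in \cite{C}, which gives measurability of the order $\inf$ and $\sup$ of a random compact set in a strongly minihedral cone. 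Your argument essentially unpacks that black box into an elementary, self-contained computation (coordinatewise lattice structure plus a countable dense selection of times), which is more transparent and does not require the reader to know the random-set formalism; the paper's argument is shorter on the page and slots directly into the framework it already leans on for the remainder of the monotone analysis. Your closing remark about replacing $\mathscr{F}$ by its $\mathbb{P}$-completion when one only has an a.s.-continuous version is a legitimate caveat, though for this additive-noise cocycle the paper's cited continuity (and your direct construction via $Z_t$) holds for every $\omega\in\Omega$, so the completion is not actually needed.
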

\begin{proof}
First, we show that $a_\tau^h(\omega)$ and $b_\tau^h(\omega)$ are well defined.
By Remark 1, it is clear that $D_\tau^h(\omega):=\overline{\{h(\varphi(t,\theta_{-t}\omega)x):t\geq\tau\}}$ and $D_\tau^{\rm id}(\omega):=\overline{\{\varphi(t,\theta_{-t}\omega)x:t\geq\tau\}}$ are two bounded sets for fixed $\tau\geq0$, $\omega\in\Omega$ and $x\in\mathbb{R}^d$, which implies that $D_\tau^h(\omega)$ and $D_\tau^{\rm id}(\omega)$ are order-bounded. Since $\mathbb{R}^d_+$ is strongly minihedral \cite[Definition 3.1.7]{C}, $a_\tau^h(\omega)$ and $b_\tau^h(\omega)$ exist.
Let $\gamma_x^\tau(\omega):=\bigcup\limits_{t\geq\tau}\{h(\varphi(t,\theta_{-t}\omega)x)\}$ be the tail of the pull-back trajectory emanating from $x$. It is noticed from Remark 1.5.1 in \cite{C} that
\[(t,x)\mapsto \varphi(t,\theta_{-t}\omega)x\ \mbox{is a continuous mapping}\]
from $\mathbb{R}_+\times\mathbb{R}^d$ into $\mathbb{R}^d$, then by Proposition 1.3.5 in \cite{C}, we have
\[\omega\mapsto\overline{\gamma_x^\tau(\omega)}:=\overline{\bigcup\limits_{t\geq\tau}\{h(\varphi(t,\theta_{-t}\omega)x)\}}\]is a random compact set with respect to $\mathscr{F}$. Together with Theorem 3.2.1 in \cite{C}, we can conclude that $a_\tau^h(\omega)$ and $b_\tau^h(\omega)$ are $\mathscr{F}$-measurable random variables in $\mathbb{R}^d_+$. When $h={\rm id}$, we can obtain the same result from Remark 1. The proof is complete.
\qquad\end{proof}

\begin{lemma}\label{lem2}
Assume that conditions {\rm (H$_1$)} and {\rm (A)} hold. Let $\varphi(t,\omega)x$ be a solution of stochastic system {\rm (\ref{problem})} with initial value $x\in\mathbb{R}^d$. Then we have
\begin{equation}\label{eq5}\mathcal{K}(\theta-\underline{\lim}\;h(\varphi))\leq\theta-\underline{\lim}\;\varphi\leq \theta-\overline{\lim}\;\varphi\leq\mathcal{K}(\theta-\overline{\lim}\;h(\varphi))\quad \mathbb{P}\mbox{-a.s.}
\end{equation}
where
\[[\theta-\underline{\lim}\;\varphi](\omega):=\lim_{\tau\rightarrow\infty}\inf\{\varphi(t,\theta_{-t}\omega)x:t\geq\tau\},\quad x\in\mathbb{R}^d,\ \omega\in\Omega\]
and
\[[\theta-\overline{\lim}\;\varphi](\omega):=\lim_{\tau\rightarrow\infty}\sup\{\varphi(t,\theta_{-t}\omega)x:t\geq\tau\},\quad x\in\mathbb{R}^d,\ \omega\in\Omega.\]
Analogously, we can define $\theta-\underline{\lim}\;h(\varphi)$ and $\theta-\overline{\lim}\;h(\varphi)$.
\end{lemma}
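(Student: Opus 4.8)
The plan is to exploit the variation-of-constants representation of the pull-back trajectory in \eqref{eq3} together with the monotonicity of $\mathcal{K}$ (which follows from the cooperativity assumption (A), since $\Phi(-s)$ has nonnegative entries for $s\le 0$) and Lemma \ref{lem1} applied to the net indexed by $\tau$. First I would fix $\omega$ outside a $\mathbb{P}$-null set (the one from Remark 1 on which the pull-back orbit is bounded and the stochastic integral converges) and work pathwise. By Remark 1 and Proposition \ref{pro1}, all four objects $\theta\text{-}\underline{\lim}\,\varphi$, $\theta\text{-}\overline{\lim}\,\varphi$, $\theta\text{-}\underline{\lim}\,h(\varphi)$, $\theta\text{-}\overline{\lim}\,h(\varphi)$ exist as limits of the monotone nets $a_\tau^{\cdot}, b_\tau^{\cdot}$. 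The middle inequality $\theta\text{-}\underline{\lim}\,\varphi\le\theta\text{-}\overline{\lim}\,\varphi$ is immediate from $a_\tau^{\rm id}\le b_\tau^{\rm id}$ for every $\tau$ and passing to the limit.

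For the outer inequalities I would proceed with, say, the upper estimate $\theta\text{-}\overline{\lim}\,\varphi\le\mathcal{K}(\theta\text{-}\overline{\lim}\,h(\varphi))$; the lower one is symmetric. Starting from \eqref{eq3}, rewrite
\[
\varphi(t,\theta_{-t}\omega)x=\Phi(t)x+\int_{-t}^0\Phi(-s)\,h(\varphi(t+s,\theta_{-t}\omega)x)\,ds+\int_{-t}^0\Phi(-s)\sigma\,dW_s,
\]
and note by the cocycle property that $h(\varphi(t+s,\theta_{-t}\omega)x)=h\bigl(\varphi(t+s,\theta_{-(t+s)}(\theta_s\omega))x\bigr)$, so for fixed $s$ this is a point of the tail $\gamma_x^{\,t+s}(\theta_s\omega)$ and hence is $\le b_{t_0}^h(\theta_s\omega)$ once $t+s\ge t_0$. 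Fixing $t_0$ and splitting the integral at $s=-(t-t_0)$, I would bound the tail part of the $ds$-integral by $\int_{-\infty}^0\Phi(-s)\,b_{t_0}^h(\theta_s\omega)\,ds$ (using $\|\Phi(-s)\|\le e^{\lambda|s|}$ and boundedness of $h$ to control the initial segment and the $\Phi(t)x$ term, both of which vanish as $t\to\infty$), and recall that the stochastic integral converges pathwise to $\int_{-\infty}^0\Phi(-s)\sigma\,dW_s$. Taking $\limsup_{t\to\infty}$ over the net first, then $t_0\to\infty$, and using Lemma \ref{lem1} to identify $\lim_{t_0\to\infty}b_{t_0}^h(\theta_s\omega)$ with $[\theta\text{-}\overline{\lim}\,h(\varphi)](\theta_s\omega)$ together with dominated convergence in the $ds$-integral, yields $\theta\text{-}\overline{\lim}\,\varphi\le\mathcal{K}(\theta\text{-}\overline{\lim}\,h(\varphi))$.

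I expect the main technical obstacle to be the careful interchange of the $\tau$-limit (and $t\to\infty$ limit) with the integral $\int_{-\infty}^0\Phi(-s)(\cdot)\,ds$: one must justify that $b_{t_0}^h(\theta_s\omega)$ is a legitimate, uniformly-in-$s$ dominating function (it is, by boundedness of $h$), that it is measurable in $s$ and jointly well-behaved enough for dominated convergence, and that the contribution of the "young" part of the trajectory (where $t+s$ is not yet $\ge t_0$) together with $\Phi(t)x$ is genuinely negligible as $t\to\infty$ — this uses the exponential decay $\|\Phi(t)\|\le e^{\lambda t}$ from (A). A secondary subtlety is that the order limits are taken along nets of infima/suprema rather than ordinary limits, so Lemma \ref{lem1} is needed precisely to guarantee that these nets converge to the same limit as the underlying (order-convergent) quantities, making the monotone bounds pass cleanly to the limit. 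The anti-order-preserving case of (H$_1$) does not affect this lemma, since here $h$ enters only through its values along the trajectory, not through any monotonicity of $h$ itself.
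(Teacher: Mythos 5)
Your argument is correct and is essentially the paper's own argument, just run ``from the outside in'': the paper fixes $\tau_n$, starts from $\mathcal{K}(a_{\tau_n})$, rewrites it (via Lemma \ref{lem1}) as a limit of infima over $\tilde t$, bounds the inner infimum by $h(\varphi(\tilde t+s,\theta_{-\tilde t}\omega)x)$, enlarges the integration domain using positivity of $\Phi$ and $h$, and recognizes the pull-back formula to reach $\theta\text{-}\underline{\lim}\,\varphi$; you instead fix $t_0$, start from the pull-back formula for $\varphi(t,\theta_{-t}\omega)x$, split the $ds$-integral at $s=-(t-t_0)$, bound the ``old'' part by $b_{t_0}^h(\theta_s\omega)$, let $t\to\infty$ (killing $\Phi(t)x$, the ``young'' segment, and the stochastic-integral tail), and finally let $t_0\to\infty$ with dominated convergence. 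Both proofs use exactly the same ingredients --- variation-of-constants, the splitting of the integral according to whether $t+s$ exceeds the cutoff, positivity of $\Phi$ from (A), boundedness of $h$ from (H$_1$), Remark 1 for the a.s.\ convergence of $\int_{-t}^0\Phi(-s)\sigma\,dW_s$, and Lemma \ref{lem1} to pass from an ordinary limit to a limit of tail infima/suprema --- so I regard this as the same proof with the chain of inequalities written in the reverse order.
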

\begin{proof}
Here, we only prove the first inequality for the sake of convenience and the rest inequalities can be proved analogously. First, in view of the fact that \[\inf\{\varphi(t,\theta_{-t}\omega)x:t\geq\tau\}=\inf\overline{\{\varphi(t,\theta_{-t}\omega)x:t\geq\tau\}}\]
and
\[\inf\{h(\varphi(t,\theta_{-t}\omega)x):t\geq\tau\}=\inf\overline{\{h(\varphi(t,\theta_{-t}\omega)x):t\geq\tau\}}\]
by Lemma A.1 in \cite{FS3}, similar to Proposition \ref{pro1}, we can easily get that $\theta-\underline{\lim}\;h(\varphi)$ and $\theta-\underline{\lim}\;\varphi$ exist, which are also two $(\mathscr{F},
\mathscr{B}(\mathbb{R}^d))$-measurable random variables. Then $\mathcal{K}(\theta-\underline{\lim}\;h(\varphi))$ is well defined and $(\mathscr{F},
\mathscr{B}(\mathbb{R}^d))$-measurable by (\ref{eq4}) and Fubini theorem.
It is noticed that
\[[\theta-\underline{\lim}\;h(\varphi)](\omega):=\lim_{\tau\rightarrow\infty}\inf\{h(\varphi(t,\theta_{-t}\omega)x):t\geq\tau\},\quad x\in\mathbb{R}^d,\ \omega\in\Omega,\]
and by Lebesgue's
dominated convergence theorem, we have
$\mathcal{K}(\theta-\underline{\lim}\;h(\varphi))=\lim\limits_{\tau\rightarrow\infty}\mathcal{K}(a_\tau)$,
where
$a_\tau(\omega):=\inf\{h(\varphi(t,\theta_{-t}\omega)x):t\geq\tau\}$.
Therefore, we can choose an increasing sequence $\{\tau_n\}_{n\in\mathbb{N}}$ such that $\tau_n\uparrow\infty$ and it is enough to prove
\[\mathcal{K}(a_{\tau_n})\leq\theta-\underline{\lim}\;\varphi,\quad\mathbb{P}\mbox{-a.s.},\ \forall n\in\mathbb{N}.\]

\noindent By the definition of $\mathcal{K}$, for fixed $\tau_n\geq0$, we have
\begin{eqnarray}\label{eq7}
[\mathcal{K}(a_{\tau_n})](\omega) &=& \int_{-\infty}^0\Phi(-s)\inf\{h(\varphi(t,\theta_{-t}\bullet)x):t\geq\tau_n\}(\theta_s\omega)ds+\int_{-\infty}^0\Phi(-s)\sigma dW(s)\nonumber\\
&=& \int_{-\infty}^0\Phi(-s)\inf\{h(\varphi(t,\theta_{-t+s}\omega)x):t\geq\tau_n\}ds+\int_{-\infty}^0\Phi(-s)\sigma dW(s) \nonumber\\
&=& \lim_{\substack{\tilde{t}\rightarrow\infty\\\tilde{t}\geq\tau_n}}\left\{\Phi(\tilde{t})x+\int_{\tau_n-\tilde{t}}^0\Phi(-s)\inf\{h(\varphi(t,\theta_{-t+s}\omega)x):t\geq\tau_n\}ds\right.\nonumber\\
&&\left.+
\int_{-\tilde{t}}^0\Phi(-s)\sigma dW(s)\right\}\nonumber\\
&=& \lim_{\substack{\tau\rightarrow\infty\\\tau\geq\tau_n}}\inf\left\{\Phi(\tilde{t})x+\int_{\tau_n-\tilde{t}}^0\Phi(-s)\inf\{h(\varphi(t,\theta_{-t+s}\omega)x):t\geq\tau_n\}ds\right.\nonumber\\
&&\left.+
\int_{-\tilde{t}}^0\Phi(-s)\sigma dW(s):\tilde{t}\geq\tau\right\}\nonumber\\
&\leq&\lim_{\substack{\tau\rightarrow\infty\\\tau\geq\tau_n}}\inf\left\{\Phi(\tilde{t})x+\int_{\tau_n-\tilde{t}}^0\Phi(-s)\{h(\varphi(\tilde{t}+s,\theta_{-\tilde{t}}\omega)x)\}ds\right.\nonumber\\
&&\left.+
\int_{-\tilde{t}}^0\Phi(-s)\sigma dW(s):\tilde{t}\geq\tau\right\}\nonumber\\
&\leq&\lim_{\tau\rightarrow\infty}\inf\left\{\Phi(\tilde{t})x+\int_{-\tilde{t}}^0\Phi(-s)\{h(\varphi(\tilde{t}+s,\theta_{-\tilde{t}}\omega)x)\}ds\right.\nonumber\\
&&\left.+
\int_{-\tilde{t}}^0\Phi(-s)\sigma dW(s):\tilde{t}\geq\tau\right\}\nonumber\\
&=&[\theta-\underline{\lim}\;\varphi](\omega),\nonumber
\end{eqnarray}
where the fourth equality has used Lemma \ref{lem1}, while the second last inequality has applied the positivity of $\Phi(t)$ and $h$.
The proof is complete.
\qquad\end{proof}

\begin{lemma}\label{lem3}
Assume that conditions {\rm (H$_1$)} and {\rm (A)} hold. Let $\varphi(t,\omega)x$ be a solution of {\rm (\ref{problem})} with initial value $x\in\mathbb{R}^d$. Then we have
\begin{enumerate}[{\rm (i)}]
\item If $h$ is order-preserving, then
\begin{equation}\label{eq11}
h(\theta-\underline{\lim}\;\varphi)\leq\theta-\underline{\lim}\;h(\varphi)\leq\theta-\overline{\lim}\;h(\varphi)\leq h(\theta-\overline{\lim}\;\varphi) \quad \mathbb{P}\mbox{-a.s.}
\end{equation}
\item If $h$ is anti-order-preserving, then
\begin{equation}\label{eq12}
h(\theta-\overline{\lim}\;\varphi)\leq\theta-\underline{\lim}\;h(\varphi)\leq\theta-\overline{\lim}\;h(\varphi)\leq h(\theta-\underline{\lim}\;\varphi) \quad \mathbb{P}\mbox{-a.s.}
\end{equation}
\end{enumerate}
\end{lemma}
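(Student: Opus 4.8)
The plan is to exploit the monotonicity of $h$ together with the characterization of $\theta$-$\underline{\lim}$ and $\theta$-$\overline{\lim}$ as limits of infima and suprema over tails of the pull-back trajectory. I would first recall, as established in Proposition~\ref{pro1} and in the proof of Lemma~\ref{lem2}, that for each $\tau\geq0$ the set $D_\tau^{\rm id}(\omega)=\overline{\{\varphi(t,\theta_{-t}\omega)x:t\geq\tau\}}$ is order-bounded, so the quantities $a_\tau^{\rm id}(\omega):=\inf\{\varphi(t,\theta_{-t}\omega)x:t\geq\tau\}$ and $b_\tau^{\rm id}(\omega):=\sup\{\varphi(t,\theta_{-t}\omega)x:t\geq\tau\}$ exist in $\mathbb{R}^d$, increase (resp.\ decrease) in $\tau$, and converge to $[\theta\text{-}\underline{\lim}\,\varphi](\omega)$ and $[\theta\text{-}\overline{\lim}\,\varphi](\omega)$ respectively. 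The middle inequality $\theta\text{-}\underline{\lim}\,h(\varphi)\leq\theta\text{-}\overline{\lim}\,h(\varphi)$ in both (\ref{eq11}) and (\ref{eq12}) is immediate from the analogous fact for $h(\varphi)$ applied to $D_\tau^h(\omega)$, since $\inf\leq\sup$ over any nonempty order-bounded set and the limits exist; so the real content is the two outer inequalities.

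For case (i), suppose $h$ is order-preserving. Fix $\tau\geq0$ and $\omega$. For every $t\geq\tau$ we have $a_\tau^{\rm id}(\omega)\leq\varphi(t,\theta_{-t}\omega)x$, hence by monotonicity $h(a_\tau^{\rm id}(\omega))\leq h(\varphi(t,\theta_{-t}\omega)x)$; taking the infimum over $t\geq\tau$ gives $h(a_\tau^{\rm id}(\omega))\leq a_\tau^h(\omega)$. Now let $\tau\to\infty$: the left side converges to $h([\theta\text{-}\underline{\lim}\,\varphi](\omega))$ by continuity of $h$ (recall $h\in C^1_b$, hence continuous) and since $a_\tau^{\rm id}(\omega)\to[\theta\text{-}\underline{\lim}\,\varphi](\omega)$, while the right side converges to $[\theta\text{-}\underline{\lim}\,h(\varphi)](\omega)$ by definition. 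This yields the leftmost inequality of (\ref{eq11}); the rightmost is entirely symmetric, using $\varphi(t,\theta_{-t}\omega)x\leq b_\tau^{\rm id}(\omega)$, monotonicity, $\sup$ over $t\geq\tau$, and $\tau\to\infty$. For case (ii), $h$ anti-order-preserving, the argument is the same but the inequalities flip under $h$: from $a_\tau^{\rm id}(\omega)\leq\varphi(t,\theta_{-t}\omega)x\leq b_\tau^{\rm id}(\omega)$ one gets $h(b_\tau^{\rm id}(\omega))\leq h(\varphi(t,\theta_{-t}\omega)x)\leq h(a_\tau^{\rm id}(\omega))$ for all $t\geq\tau$; taking $\inf$ and $\sup$ over $t\geq\tau$ yields $h(b_\tau^{\rm id}(\omega))\leq a_\tau^h(\omega)\leq b_\tau^h(\omega)\leq h(a_\tau^{\rm id}(\omega))$, and passing $\tau\to\infty$ with continuity of $h$ gives (\ref{eq12}).

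The only subtle point — the step I would flag as the main obstacle — is the interchange of $h$ with the limit in $\tau$, i.e.\ justifying $h(a_\tau^{\rm id}(\omega))\to h([\theta\text{-}\underline{\lim}\,\varphi](\omega))$. This is where one must use that $a_\tau^{\rm id}(\omega)$ genuinely converges in $\mathbb{R}^d$ (not merely that the scalar tails converge), which in turn rests on order-boundedness of $D_\tau^{\rm id}(\omega)$ from Remark~1 and the strong minihedrality of $\mathbb{R}^d_+$ (so that the monotone net of infima has a limit), exactly as in Lemma~\ref{lem2}; once convergence in norm is in hand, continuity of the bounded $C^1$ map $h$ closes the gap immediately. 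A secondary bookkeeping point is measurability of all four quantities $h(\theta\text{-}\underline{\lim}\,\varphi)$, $\theta\text{-}\underline{\lim}\,h(\varphi)$, etc., but this is already supplied by Proposition~\ref{pro1} and the measurability discussion in Lemma~\ref{lem2}, so it need only be cited. Everything else is a routine application of monotonicity of $h$ to the tail sets and passing to the limit.
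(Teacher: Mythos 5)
Your proposal is correct and follows essentially the same route as the paper: apply the order-preserving (resp. anti-order-preserving) property of $h$ pointwise on the tail $\{\varphi(t,\theta_{-t}\omega)x:t\geq\tau\}$ to get $h(a_\tau^{\rm id})\leq a_\tau^h$ (and the analogues), then pass $\tau\to\infty$ using continuity of $h$ and convergence of the monotone nets of infima/suprema. The paper records only the leftmost inequality of (i) and declares the rest analogous, whereas you spell out all four bounds and explicitly justify the interchange of $h$ with the $\tau$-limit; these are presentation differences, not a different argument.
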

\begin{proof}
Indeed, the proof of the first inequality in (\ref{eq11}) is adequate and the rest results of this lemma can be obtained analogously.
Observe that $h$ is order-preserving, fixed $\tau\geq0$ and $x\in\mathbb{R}^d$, then for $\forall t\geq\tau$, we have
\[ h(\inf\{\varphi(t,\theta_{-t}\omega)x:t\geq\tau\})\leq h(\varphi(t,\theta_{-t}\omega)x)
\]
and
\begin{equation}\label{eq13}
h(\inf\{\varphi(t,\theta_{-t}\omega)x:t\geq\tau\})\leq\inf\{h(\varphi(t,\theta_{-t}\omega)x):t\geq\tau\}.
\end{equation}
Let $\tau\rightarrow\infty$ in (\ref{eq13}). Then
\begin{eqnarray}h(\theta-\underline{\lim}\;\varphi)(\omega)
&=&
\lim_{\tau\rightarrow\infty}h(\inf\{\varphi(t,\theta_{-t}\omega)x:t\geq\tau\})\nonumber\\
&\leq&
\lim_{\tau\rightarrow\infty}\inf\{h(\varphi(t,\theta_{-t}\omega)x):t\geq\tau\}=\theta-\underline{\lim}\;h(\varphi)(\omega).\nonumber
\end{eqnarray}
The proof is complete.
\qquad\end{proof}

\begin{lemma}\label{lem4}
Assume that conditions {\rm (H$_1$)} and {\rm (A)} hold. Then for stochastic system {\rm (\ref{problem})}, let $\varphi(t,\omega)x$ be a solution of {\rm (\ref{problem})} with initial value $x\in\mathbb{R}^d$, we have
\begin{equation}\label{eq14}
\mathcal{K}(a_\tau^h)\leq\theta-\underline{\lim}\;\varphi\leq \theta-\overline{\lim}\;\varphi\leq\mathcal{K}(b_\tau^h),\quad\mathbb{P}\mbox{-a.s.},\ \tau\geq0,
\end{equation}
where $a_\tau^h(\omega)$ and $b_\tau^h(\omega)$ are defined in Proposition {\rm \ref{pro1}}. Furthermore, define $\mathcal{K}^h:=h\circ\mathcal{K}$ as a gain operator. Then we have
\begin{enumerate}[{\rm (i)}]
\item If $h$ is order-preserving, then for fixed $\tau\geq0$
\begin{equation}\label{eq15}
(\mathcal{K}^h)^k(a_\tau^h)\leq\theta-\underline{\lim}\;h(\varphi)\leq \theta-\overline{\lim}\;h(\varphi)\leq(\mathcal{K}^h)^k(b_\tau^h),\quad\mathbb{P}\mbox{-a.s.},\ k\in\mathbb{N}.\
\end{equation}
\item  If $h$ is anti-order-preserving, then for fixed $\tau\geq0$
\begin{equation}\label{eq16}
(\mathcal{K}^h)^{2k}(a_\tau^h)\leq\theta-\underline{\lim}\;h(\varphi)\leq \theta-\overline{\lim}\;h(\varphi)\leq(\mathcal{K}^h)^{2k}(b_\tau^h),\quad\mathbb{P}\mbox{-a.s.},\ k\in\mathbb{N}.
\end{equation}
\end{enumerate}
\end{lemma}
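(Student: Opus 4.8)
The plan is to obtain the first chain of inequalities \eqref{eq14} directly from Lemma \ref{lem2} together with the comparison between $\theta\text{-}\underline{\lim}\,h(\varphi)$, $\theta\text{-}\overline{\lim}\,h(\varphi)$ and the stabilized bounds $a_\tau^h,b_\tau^h$. Concretely, for each fixed $\tau\ge 0$ and every $t\ge\tau$ we have $a_\tau^h(\omega)\le h(\varphi(t,\theta_{-t}\omega)x)\le b_\tau^h(\omega)$ by definition of the infimum and supremum over the tail; passing to $\inf_{t\ge\tau'}$ and $\sup_{t\ge\tau'}$ for $\tau'\ge\tau$ and then letting $\tau'\to\infty$ yields $a_\tau^h\le\theta\text{-}\underline{\lim}\,h(\varphi)\le\theta\text{-}\overline{\lim}\,h(\varphi)\le b_\tau^h$. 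Since $\mathcal{K}$ is order-preserving on tempered random variables (its kernel $\Phi(-s)$ and the matrix $\sigma$-free structure of the noise term make the input-dependent part monotone in $u$ in the cone order, which is exactly the positivity of $\Phi(t)$ guaranteed by (A)), applying $\mathcal{K}$ to that sandwich and combining with \eqref{eq5} of Lemma \ref{lem2} gives
\[
\mathcal{K}(a_\tau^h)\le\mathcal{K}(\theta\text{-}\underline{\lim}\,h(\varphi))\le\theta\text{-}\underline{\lim}\,\varphi\le\theta\text{-}\overline{\lim}\,\varphi\le\mathcal{K}(\theta\text{-}\overline{\lim}\,h(\varphi))\le\mathcal{K}(b_\tau^h),
\]
which is \eqref{eq14}.

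For part (i), I would argue by induction on $k$. The base case $k=0$ (interpreting $(\mathcal{K}^h)^0$ as the identity, or $k=1$ as the genuine first step) follows from \eqref{eq14} after applying $h$: since $h$ is order-preserving, $h(\mathcal{K}(a_\tau^h))\le h(\theta\text{-}\underline{\lim}\,\varphi)$ and $h(\theta\text{-}\overline{\lim}\,\varphi)\le h(\mathcal{K}(b_\tau^h))$, and Lemma \ref{lem3}(i) supplies $h(\theta\text{-}\underline{\lim}\,\varphi)\le\theta\text{-}\underline{\lim}\,h(\varphi)$ and $\theta\text{-}\overline{\lim}\,h(\varphi)\le h(\theta\text{-}\overline{\lim}\,\varphi)$; hence $\mathcal{K}^h(a_\tau^h)\le\theta\text{-}\underline{\lim}\,h(\varphi)\le\theta\text{-}\overline{\lim}\,h(\varphi)\le\mathcal{K}^h(b_\tau^h)$. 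For the inductive step, suppose the bound holds for $k$. Apply $\mathcal{K}$ to $(\mathcal{K}^h)^k(a_\tau^h)\le\theta\text{-}\underline{\lim}\,h(\varphi)$; monotonicity of $\mathcal{K}$ gives $\mathcal{K}((\mathcal{K}^h)^k(a_\tau^h))\le\mathcal{K}(\theta\text{-}\underline{\lim}\,h(\varphi))\le\theta\text{-}\underline{\lim}\,\varphi$ by \eqref{eq5}; applying order-preserving $h$ and using Lemma \ref{lem3}(i) again yields $(\mathcal{K}^h)^{k+1}(a_\tau^h)\le\theta\text{-}\underline{\lim}\,h(\varphi)$, and symmetrically for the $b$-side. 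One subtlety to check is that $a_\tau^h,b_\tau^h$ are tempered (indeed bounded, by the boundedness of $h$ in (H$_1$) noted in Remark 1), so every iterate $(\mathcal{K}^h)^k(a_\tau^h)$ stays in the domain of $\mathcal{K}$; this is routine given Remark 1.

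Part (ii) is the same induction but now tracking the order reversal: when $h$ is anti-order-preserving, applying $\mathcal{K}$ (order-preserving) followed by $h$ (order-reversing) is order-reversing, so one application of $\mathcal{K}^h$ flips the inequalities; Lemma \ref{lem3}(ii) is the replacement for Lemma \ref{lem3}(i) and it too carries the reversal. Thus $(\mathcal{K}^h)^{2k}$ — an even number of order-reversals — is order-preserving, which is why the statement is phrased with exponent $2k$. Concretely, from \eqref{eq14} apply $\mathcal{K}^h$ once to get $\theta\text{-}\overline{\lim}\,h(\varphi)\le(\mathcal{K}^h)(a_\tau^h)$ and $(\mathcal{K}^h)(b_\tau^h)\le\theta\text{-}\underline{\lim}\,h(\varphi)$ via Lemma \ref{lem3}(ii); apply $\mathcal{K}^h$ a second time and the inequalities flip back, landing on $(\mathcal{K}^h)^2(a_\tau^h)\le\theta\text{-}\underline{\lim}\,h(\varphi)\le\theta\text{-}\overline{\lim}\,h(\varphi)\le(\mathcal{K}^h)^2(b_\tau^h)$; then induct in steps of two exactly as before.

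The main obstacle, such as it is, is bookkeeping rather than depth: one must be careful that the monotonicity of $\mathcal{K}$ is genuinely available (it rests only on the positivity of the fundamental matrix, i.e.\ on assumption (A), and the noise term $\int_{-\infty}^0\Phi(-s)\sigma\,dW_s$ being independent of $u$ so it does not interfere with the cone order), and that in the anti-order-preserving case the parity of the number of sign flips is handled correctly — this is exactly the reason the exponent is $2k$ in \eqref{eq16} rather than $k$. Everything else is a direct consequence of Lemmas \ref{lem2} and \ref{lem3} and a straightforward induction.
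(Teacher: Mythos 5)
Your argument coincides with the paper's proof: derive $a_\tau^h\le\theta\text{-}\underline{\lim}\,h(\varphi)\le\theta\text{-}\overline{\lim}\,h(\varphi)\le b_\tau^h$ from the definitions, apply the order-preserving operator $\mathcal{K}$ and combine with Lemma~\ref{lem2} to get \eqref{eq14}, then iterate via Lemma~\ref{lem3} and the monotonicity (resp.\ anti-monotonicity) of $h$ by induction, the factor $2k$ in case (ii) coming from the parity of the order reversals. This is exactly the route the paper takes; your extra remark on temperedness of the iterates is a harmless additional check.
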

\begin{proof}
By definitions of $a_\tau^h$ and $b_\tau^h$, it is evident that
\[a_\tau^h\leq\theta-\underline{\lim}\;h(\varphi)\leq\theta-\overline{\lim}\;h(\varphi)\leq b_\tau^h,\quad \tau\geq0.\]
Observe that $\Phi$ is monotone based on the assumption (A), then $\mathcal{K}$ is also monotone with respect to $u$,
and consequently
\[\mathcal{K}(a_\tau^h)\leq\mathcal{K}(\theta-\underline{\lim}\;h(\varphi))\leq\mathcal{K}(\theta-\overline{\lim}\;h(\varphi))\leq\mathcal{K}(b_\tau^h),
\quad\mathbb{P}\mbox{-a.s.},\ \tau\geq0,\]
which together with Lemma \ref{lem2} implies
\[\mathcal{K}(a_\tau^h)\leq\theta-\underline{\lim}\;\varphi\leq \theta-\overline{\lim}\;\varphi\leq\mathcal{K}(b_\tau^h),\quad\mathbb{P}\mbox{-a.s.},\ \tau\geq0.\]
That is, (\ref{eq14}) holds.

In what follows, we claim that (\ref{eq15}) and (\ref{eq16}) hold. We consider two cases as follows:

{\bf Case (i)} If $h$ is order-preserving, then it deduces that $h$ preserves the inequalities in (\ref{eq14}):
\[\mathcal{K}^h(a_\tau^h)\leq h(\theta-\underline{\lim}\;\varphi)\leq h(\theta-\overline{\lim}\;\varphi)\leq\mathcal{K}^h(b_\tau^h),\quad\mathbb{P}\mbox{-a.s.},\ \tau\geq0.\]
As a consequence of Lemma \ref{lem3}, it follows that
\[\mathcal{K}^h(a_\tau^h)\leq\theta-\underline{\lim}\;h(\varphi)\leq \theta-\overline{\lim}\;h(\varphi)\leq\mathcal{K}^h(b_\tau^h),\quad\mathbb{P}\mbox{-a.s.},\ \tau\geq0.\]
This proves that (\ref{eq15}) is true for $k=1$.

Next we assume that for some $k\in\mathbb{N}$, we have obtained
\[(\mathcal{K}^h)^k(a_\tau^h)\leq\theta-\underline{\lim}\;h(\varphi)\leq \theta-\overline{\lim}\;h(\varphi)\leq(\mathcal{K}^h)^k(b_\tau^h),\quad\mathbb{P}\mbox{-a.s.},\ \tau\geq0.\]
From the monotonicity of $\mathcal{K}$ and $h$, Lemma \ref{lem2} and Lemma \ref{lem3}, we can have
\begin{eqnarray}
\mathcal{K}[(\mathcal{K}^h)^k(a_\tau^h)]
&\leq&\mathcal{K}(\theta-\underline{\lim}\;h(\varphi))\leq\theta-\underline{\lim}\;\varphi\nonumber\\
&\leq&\theta-\overline{\lim}\;\varphi\leq\mathcal{K}(\theta-\overline{\lim}\;h(\varphi))
\leq\mathcal{K}[(\mathcal{K}^h)^k(b_\tau^h)],\quad\mathbb{P}\mbox{-a.s.},\ \tau\geq0.
\nonumber
\end{eqnarray}
Acted by $h$ in the above inequalities, we get that
\begin{eqnarray}
(\mathcal{K}^h)^{k+1}(a_\tau^h)
&\leq&h(\theta-\underline{\lim}\;\varphi)\leq\theta-\underline{\lim}\;h(\varphi)\nonumber\\
&\leq&\theta-\overline{\lim}\;h(\varphi)\leq h(\theta-\overline{\lim}\;\varphi)\leq(\mathcal{K}^h)^{k+1}(b_\tau^h),
\quad\mathbb{P}\mbox{-a.s.},\ \tau\geq0.
\nonumber
\end{eqnarray}
Therefore, we conclude that (\ref{eq15}) holds by mathematical induction.

{\bf Case (ii)} Assume that $h$ is anti-order-preserving, similar as the case of (i), we deduce that
\[\mathcal{K}^h(b_\tau^h)\leq h(\theta-\overline{\lim}\;\varphi)\leq h(\theta-\underline{\lim}\;\varphi)\leq\mathcal{K}^h(a_\tau^h),\quad\mathbb{P}\mbox{-a.s.},\ \tau\geq0.\]
Using (\ref{eq12}) in Lemma \ref{lem3}, we have
\[\mathcal{K}^h(b_\tau^h)\leq\theta-\underline{\lim}\;h(\varphi)\leq \theta-\overline{\lim}\;h(\varphi)\leq\mathcal{K}^h(a_\tau^h),\quad\mathbb{P}\mbox{-a.s.},\ \tau\geq0.\]
Combining the monotonicity of $\mathcal{K}$ and Lemma \ref{lem2}, it shows that
\[\mathcal{K}[\mathcal{K}^h(b_\tau^h)]
\leq\theta-\underline{\lim}\;\varphi
\leq\theta-\overline{\lim}\;\varphi
\leq\mathcal{K}[\mathcal{K}^h(a_\tau^h)],\quad\mathbb{P}\mbox{-a.s.},\ \tau\geq0,\]
which together with the anti-monotonicity of $h$ and  (\ref{eq12}) in Lemma \ref{lem3} implies
\[(\mathcal{K}^h)^2(a_\tau^h)\leq\theta-\underline{\lim}\;h(\varphi)\leq \theta-\overline{\lim}\;h(\varphi)\leq(\mathcal{K}^h)^2(b_\tau^h),\quad\mathbb{P}\mbox{-a.s.},\ \tau\geq0.\]
The rest proof of (\ref{eq16}) can be obtained analogously as the case (i) by mathematical induction.
The proof is complete.
\qquad\end{proof}
\section{Main Results}
In this section, we will state our main result on the stability of nonlinear stochastic system (\ref{problem}) and present its proof. We begin with a lemma.
\begin{lemma}\label{lem5}
Assume that conditions {\rm (H$_1$), (H$_2$)} and {\rm (A)} hold. Let $\mathcal{M}_{\mathscr{F}}^b(\Omega;[0,N])$ be the space of $\mathscr{F}$-measurable functions $f:\Omega\rightarrow[0,N]$, where $N=(N_1,\ldots,N_d)$, $N_i=\sup\limits_{x\in\mathbb{R}^d}|h_i(x)|$, $i=1,\ldots,d$. We introduce a metric on $\mathcal{M}_{\mathscr{F}}^b(\Omega;[0,N])$ as follows:
\[\rho(f_1,f_2):=|f_1-f_2|_\infty=\sup\limits_{\omega\in\Omega}|f_1(\omega)-f_2(\omega)|,\quad \forall f_1, f_2\in\mathcal{M}_{\mathscr{F}}^b(\Omega;[0,N]),\]
 then $(\mathcal{M}_{\mathscr{F}}^b,\rho)$ is a complete metric space and the gain operator
$\mathcal{K}^h:=h\circ\mathcal{K}:\mathcal{M}_{\mathscr{F}}^b\rightarrow\mathcal{M}_{\mathscr{F}}^b$  is a contraction mapping, where the definition of input-to-state characteristic  operator $\mathcal{K}$ can be chosen an $\mathbb{R}^d$-valued version for all $\omega\in\Omega$.
\end{lemma}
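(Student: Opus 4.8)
The plan is to verify the two assertions of Lemma~\ref{lem5} in turn: first that $(\mathcal{M}_{\mathscr{F}}^b,\rho)$ is a complete metric space, and second that $\mathcal{K}^h$ maps this space into itself and is a contraction. Completeness is the routine part: the space $[0,N]$ is a closed box in $\mathbb{R}^d$, hence complete, and a Cauchy sequence $(f_n)$ in the uniform metric $\rho$ converges uniformly to some function $f$; the pointwise limit of $\mathscr{F}$-measurable functions is $\mathscr{F}$-measurable, and the uniform limit stays in $[0,N]$ pointwise, so $f\in\mathcal{M}_{\mathscr{F}}^b(\Omega;[0,N])$. I would state this briefly and move on.

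For the self-map property, I first need that $\mathcal{K}$ applied to an element of $\mathcal{M}_{\mathscr{F}}^b$ is a well-defined $\mathscr{F}$-measurable $\mathbb{R}^d$-valued random variable: any $f\in\mathcal{M}_{\mathscr{F}}^b$ is bounded, hence tempered, so Remark~1 applies and $\mathcal{K}(f)$ is finite $\mathbb{P}$-a.s.; choosing the $\mathbb{R}^d$-valued version (the deterministic stochastic-integral part $X_\infty$ together with the pathwise integral $\int_{-\infty}^0\Phi(-s)f(\theta_s\omega)\,ds$) makes it defined for all $\omega$, and measurability follows from Fubini as in Lemma~\ref{lem2}. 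Then $\mathcal{K}^h(f)=h\circ\mathcal{K}(f)$ takes values in $\mathbb{R}^d_+$ by (H$_1$), and each coordinate $h_i$ is bounded by $N_i$, so $\mathcal{K}^h(f)\in\mathcal{M}_{\mathscr{F}}^b(\Omega;[0,N])$.

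The contraction estimate is the heart of the matter. For $f_1,f_2\in\mathcal{M}_{\mathscr{F}}^b$, the stochastic-integral terms in $\mathcal{K}(f_1)$ and $\mathcal{K}(f_2)$ cancel, leaving
\[
|\mathcal{K}(f_1)(\omega)-\mathcal{K}(f_2)(\omega)|
\le\int_{-\infty}^0\|\Phi(-s)\|\,|f_1(\theta_s\omega)-f_2(\theta_s\omega)|\,ds,
\]
and the $i$-th coordinate of $\Phi(-s)(f_1-f_2)$ is a sum of $d$ terms each bounded by $|\Phi_{ij}(-s)|\,|f_1-f_2|\le e^{\lambda(-s)}\rho(f_1,f_2)$ by (\ref{Fun}), so $|\mathcal{K}(f_1)(\omega)-\mathcal{K}(f_2)(\omega)|\le d\,\rho(f_1,f_2)\int_{-\infty}^0 e^{-\lambda s}\,ds=-\tfrac{d}{\lambda}\rho(f_1,f_2)$ — note $\lambda<0$ so $\int_{-\infty}^0 e^{-\lambda s}ds=\int_0^\infty e^{\lambda r}dr=-1/\lambda$. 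Applying $h$: since $|h_i(y)-h_i(z)|\le\sum_{j}\sup|\partial h_i/\partial x_j|\,|y_j-z_j|\le L\,d\,|y-z|$ by (H$_2$) and the mean value theorem, we get
\[
\rho(\mathcal{K}^h(f_1),\mathcal{K}^h(f_2))\le L d\cdot\Bigl(-\frac{d}{\lambda}\Bigr)\rho(f_1,f_2)=-\frac{Ld^2}{\lambda}\,\rho(f_1,f_2),
\]
and $-Ld^2/\lambda<1$ by (H$_2$), giving the contraction. The main obstacle — really a bookkeeping point rather than a deep one — is being careful with the two nested norms ($|\cdot|=\max$ on $\mathbb{R}^d$ versus the entrywise bound on $\Phi$) so that the constant comes out exactly as $-Ld^2/\lambda$; one must track that one factor of $d$ comes from summing the $d$ entries of a row of $\Phi$ against $\max_j|f_1-f_2|_j$, and the second factor of $d$ comes identically from the Lipschitz estimate on each $h_i$. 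I would also remark that the pathwise integral defining $\mathcal{K}(f)$ is finite for \emph{all} $\omega$ here because $f$ is genuinely bounded (not merely tempered), which is what lets us work with $\sup_{\omega\in\Omega}$ rather than an a.s. statement.
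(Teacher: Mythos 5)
Your proposal is correct and follows essentially the same route as the paper's own proof: completeness of $(\mathcal{M}_{\mathscr{F}}^b,\rho)$ via uniform Cauchy convergence and preservation of measurability under pointwise limits; self-mapping from boundedness of $h$ (giving values in $[0,N]$) together with Fubini for measurability; and the contraction estimate by canceling the deterministic backward It\^o-integral term and bounding the remaining pathwise integral with the fundamental-matrix decay $\|\Phi(t)\|\le e^{\lambda t}$ and the Lipschitz bound on $h$, landing on the same constant $-Ld^2/\lambda<1$. The only cosmetic difference is that the paper packages the two factors of $d$ as repeated uses of the operator-norm inequality $|\Phi x|\le d\|\Phi\|\,|x|$ (once for $Dh$ via the mean value theorem, once for $\Phi(-s)$ inside the integral), while you split them as a row-sum bound on $\Phi$ plus a coordinatewise Lipschitz bound on $h$ --- these are the same estimate expressed differently, and your remark that boundedness (rather than mere temperedness) of $f$ is what makes the definition valid for every $\omega$ captures the key technical point behind choosing an $\mathbb{R}^d$-valued version of $\mathcal{K}$.
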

\begin{proof}
It is clear that $(\mathcal{M}_{\mathscr{F}}^b,\rho)$ is a metric space. Now we will show that the metric space $\mathcal{M}_{\mathscr{F}}^b$ is complete with respect to $\rho$. To prove this, we can choose a Cauchy sequence $\{f_n, n\in\mathbb{N}\}$ in $(\mathcal{M}_{\mathscr{F}}^b,\rho)$, we denote a function $f$ as follows:
\[f(\omega):=\lim_{n\rightarrow\infty}f_n(w)\in [0,N]\quad \mbox{for all}\quad\omega\in\Omega,\]
which holds based on the fact that $\{f_n(\omega), n\in\mathbb{N}\}$ is a Cauchy sequence in $\mathbb{R}^d$ for fixed $\omega\in\Omega$. It is noticed that the limit of a family of $\mathscr{F}$-measurable functions is a $\mathscr{F}$-measurable function \cite{Co}. This shows that $f\in \mathcal{M}_{\mathscr{F}}^b(\Omega;[0,N])$.

In what follows, we will prove that $|f-f_n|_\infty\rightarrow0$, as $n\rightarrow\infty$. Observing that $\{f_n(\omega), n\in\mathbb{N}\}$ is a Cauchy sequence, we know that for $\forall\varepsilon>0$, there exists $N_0=N_0(\varepsilon)\in\mathbb{N}$ such that for $n,m\geq N_0$,
\[\sup_{\omega\in\Omega}|f_m(\omega)-f_n(\omega)|<\varepsilon.\]
Let $m\rightarrow\infty$, then
\[\sup_{\omega\in\Omega}|f(\omega)-f_n(\omega)|\leq\varepsilon\quad{\rm for\ all}\ n \geq N_0,\]
which implies that $|f-f_n|_\infty\rightarrow0$, as $n\rightarrow\infty$.
Thus $(\mathcal{M}_{\mathscr{F}}^b,\rho)$ is a complete metric space.

Next we claim that $\mathcal{K}^h:\mathcal{M}_{\mathscr{F}}^b\rightarrow\mathcal{M}_{\mathscr{F}}^b$ is a contraction mapping. First, we should show that $\mathcal{K}^h:\mathcal{M}_{\mathscr{F}}^b\rightarrow\mathcal{M}_{\mathscr{F}}^b$ is well defined. From (H$_1$), it follows that $h:\mathbb{R}^d\rightarrow[0,N]$. By the definition of $\mathcal{K}$, measurability of $\theta$ and Fubini theorem, it is evident that $\mathcal{K}(u)$ is a $\mathscr{F}$-measurable function, which yields $\mathcal{K}^h:\mathcal{M}_{\mathscr{F}}^b\rightarrow\mathcal{M}_{\mathscr{F}}^b$.

Finally, we prove that $\mathcal{K}^h$ is a contraction mapping. By (H$_1$) and (H$_2$), we can have
\[\sup_{x\in\mathbb{R}^d}\|Dh(x)\|\leq L,\]
where $Dh(x)$ is the Jacobian of $h$.
Let $f_1$ and $f_2$ be two elements in $(\mathcal{M}_{\mathscr{F}}^b,\rho)$. By the fact that $|\Phi x|\leq d\|\Phi\|\cdot|x|$
 for all $x\in\mathbb{R}^d$ and $\Phi\in\mathbb{R}^{d\times d}$, we get
\begin{eqnarray}
|\mathcal{K}^h(f_1)-\mathcal{K}^h(f_2)|_\infty
&=& |h[\mathcal{K}(f_1)]-h[\mathcal{K}(f_2)]|_\infty\nonumber\\
&=& |Dh[\mathcal{K}(f_2)+\mu(\mathcal{K}(f_1)-\mathcal{K}(f_2))]\cdot[\mathcal{K}(f_1)-\mathcal{K}(f_2)]|_\infty\nonumber\\
&\leq& d\sup_{x\in\mathbb{R}^d}\|Dh(x)\|\cdot|\mathcal{K}(f_1)-\mathcal{K}(f_2)|_\infty\nonumber\\
&\leq& Ld\left|\int_{-\infty}^0\Phi(-s)f_1(\theta_s\omega)ds-\int_{-\infty}^0\Phi(-s)f_2(\theta_s\omega)ds\right|_\infty\nonumber\\
&\leq& Ld^2\int_{-\infty}^0\|\Phi(-s)\|\cdot|f_1-f_2|_\infty ds\nonumber\\
&\leq& Ld^2\int_{-\infty}^0{\rm e}^{-\lambda s}ds|f_1-f_2|_\infty\nonumber\\
&=& -\frac{Ld^2}{\lambda}|f_1-f_2|_\infty,\nonumber
\end{eqnarray}
where $0<\mu<1$, $-\frac{Ld^2}{\lambda}<1$ and the second last inequality holds due to the condition (A). The proof is complete.
\qquad\end{proof}

\begin{theorem}[{\rm Small-gain theorem}]\label{thm1}
Assume that conditions {\rm (H$_1$), (H$_2$)} and {\rm (A)} hold. Then the gain operator
$\mathcal{K}^h:=h\circ\mathcal{K}:\mathcal{M}_{\mathscr{F}}^b\rightarrow\mathcal{M}_{\mathscr{F}}^b$  possesses  a  unique  nonnegative fixed point $u\in\mathcal{M}_{\mathscr{F}}^b(\Omega;[0,N])$ such that for all $x\in\mathbb{R}^d$
\begin{equation}
\label{eq17}\lim_{t\rightarrow\infty}\varphi(t,\theta_{-t}\omega)x=[\mathcal{K}(u)](\omega)\quad \mathbb{P}\mbox{-a.s.}
\end{equation}
Moreover, $\varphi(t,\omega)[\mathcal{K}(u)](\omega)=[\mathcal{K}(u)](\theta_t\omega)$ $\mathbb{P}$-a.s., $t>0$, i.e., the image $[\mathcal{K}(u)](\cdot)$ at the fixed point $u$ for input-to-state characteristic operator  is a random equilibrium.
\end{theorem}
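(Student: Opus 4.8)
The plan is to combine the contraction mapping principle (Lemma~\ref{lem5}) with the sandwich estimates of Lemma~\ref{lem4}. First I would invoke Lemma~\ref{lem5}: since $(\mathcal{M}_{\mathscr{F}}^b,\rho)$ is a complete metric space and $\mathcal{K}^h$ is a contraction on it, the Banach fixed point theorem gives a unique $u\in\mathcal{M}_{\mathscr{F}}^b(\Omega;[0,N])$ with $\mathcal{K}^h(u)=u$, and moreover $(\mathcal{K}^h)^k(g)\to u$ uniformly for every $g\in\mathcal{M}_{\mathscr{F}}^b$. In particular, taking $g=a_\tau^h$ and $g=b_\tau^h$ (which indeed lie in $\mathcal{M}_{\mathscr{F}}^b$, being $\mathscr{F}$-measurable functions valued in $[0,N]$ by Proposition~\ref{pro1} and the boundedness of $h$), we get $(\mathcal{K}^h)^k(a_\tau^h)\to u$ and $(\mathcal{K}^h)^k(b_\tau^h)\to u$ as $k\to\infty$.

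Next I would pass to the limit $k\to\infty$ in the iterated sandwich inequalities of Lemma~\ref{lem4}. In the order-preserving case, (\ref{eq15}) gives, for each fixed $\tau\ge0$,
\[
(\mathcal{K}^h)^k(a_\tau^h)\leq\theta-\underline{\lim}\;h(\varphi)\leq \theta-\overline{\lim}\;h(\varphi)\leq(\mathcal{K}^h)^k(b_\tau^h)\quad\mathbb{P}\mbox{-a.s.},
\]
and letting $k\to\infty$ forces $\theta-\underline{\lim}\;h(\varphi)=\theta-\overline{\lim}\;h(\varphi)=u$ $\mathbb{P}$-a.s.; in the anti-order-preserving case the same conclusion follows from (\ref{eq16}) along the even subsequence $2k$. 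Then (\ref{eq14}) of Lemma~\ref{lem4}, namely $\mathcal{K}(a_\tau^h)\le\theta-\underline{\lim}\;\varphi\le\theta-\overline{\lim}\;\varphi\le\mathcal{K}(b_\tau^h)$, together with the fact established in Lemma~\ref{lem4} that $\theta-\underline{\lim}\;\varphi\le\theta-\overline{\lim}\;\varphi\le\mathcal{K}(\theta-\overline{\lim}\;h(\varphi))$ and $\mathcal{K}(\theta-\underline{\lim}\;h(\varphi))\le\theta-\underline{\lim}\;\varphi$ (Lemma~\ref{lem2}), collapses to $\theta-\underline{\lim}\;\varphi=\theta-\overline{\lim}\;\varphi=\mathcal{K}(u)$ $\mathbb{P}$-a.s. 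Since the liminf and limsup of the pull-back net agree, the net itself converges, giving (\ref{eq17}): $\lim_{t\to\infty}\varphi(t,\theta_{-t}\omega)x=[\mathcal{K}(u)](\omega)$ for every $x\in\mathbb{R}^d$, $\mathbb{P}$-a.s.

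For the equilibrium property I would use the cocycle identity and the uniqueness of the pull-back limit. Fix $t>0$. By the cocycle property, $\varphi(t+s,\theta_{-s}\omega)x=\varphi(t,\theta_{-s}\omega)\big(\varphi(s,\theta_{-s}\omega)x\big)$; replacing $\omega$ by $\theta_t\omega$ this reads $\varphi(t+s,\theta_{t-s}\omega)x=\varphi(t,\theta_{-s}\theta_t\omega)\big(\varphi(s,\theta_{-s}\theta_t\omega)x\big)$. Letting $s\to\infty$, the left-hand side converges (along $t+s\to\infty$) to $[\mathcal{K}(u)](\theta_t\omega)$ by (\ref{eq17}) applied at $\theta_t\omega$, while on the right-hand side $\varphi(s,\theta_{-s}(\theta_t\omega))x\to[\mathcal{K}(u)](\theta_t\omega)$ and, by continuity of $\varphi(t,\theta_t\omega)\cdot$ (cocycle property, Definition of RDS) together with the standard fact that the pull-back convergence is uniform on bounded sets (the bound in Remark~1 is uniform in $x$ on compacta), the right-hand side converges to $\varphi(t,\omega)[\mathcal{K}(u)](\omega)$. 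Hence $\varphi(t,\omega)[\mathcal{K}(u)](\omega)=[\mathcal{K}(u)](\theta_t\omega)$ $\mathbb{P}$-a.s.

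The main obstacle I anticipate is the rigorous justification of interchanging the limit $s\to\infty$ with the application of $\varphi(t,\omega)$ in the last step: one needs that the convergence $\varphi(s,\theta_{-s}\omega)x\to[\mathcal{K}(u)](\omega)$ holds not merely pointwise in $x$ but uniformly over $x$ in a neighbourhood of the limit, so that continuity of $\varphi(t,\omega)$ can be pushed through — this requires extracting uniform-on-bounded-sets convergence from the estimates, and checking that the exceptional $\mathbb{P}$-null set can be chosen independent of $x$ (which follows because the $h$-bound and the martingale convergence in Remark~1 are $x$-independent). A secondary point needing care is that $a_\tau^h,b_\tau^h$ genuinely belong to the space $\mathcal{M}_{\mathscr{F}}^b$ on which the contraction acts, which is exactly what Proposition~\ref{pro1} and the range condition $h(\mathbb{R}^d)\subseteq[0,N]$ supply.
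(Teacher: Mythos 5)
The first part of your argument — invoking Lemma~\ref{lem5} to get a unique fixed point $u$, passing to the $k\to\infty$ limit in the sandwich inequalities of Lemma~\ref{lem4} to force $\theta-\underline{\lim}\;h(\varphi)=\theta-\overline{\lim}\;h(\varphi)=u$, then Lemma~\ref{lem2} to get $\theta-\underline{\lim}\;\varphi=\theta-\overline{\lim}\;\varphi=\mathcal{K}(u)$, and finally the squeeze $\inf\{\varphi(t,\theta_{-t}\omega)x:t\ge\tau\}\le\varphi(\tau,\theta_{-\tau}\omega)x\le\sup\{\varphi(t,\theta_{-t}\omega)x:t\ge\tau\}$ to obtain the pull-back limit — is exactly the paper's route.

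The equilibrium argument, however, contains a genuine error. The cocycle property is $\varphi(t+s,\omega')=\varphi(t,\theta_s\omega')\circ\varphi(s,\omega')$; substituting $\omega'=\theta_{-s}\omega$ gives
\[
\varphi(t+s,\theta_{-s}\omega)=\varphi(t,\omega)\circ\varphi(s,\theta_{-s}\omega),
\]
with the \emph{outer} factor $\varphi(t,\omega)$, not $\varphi(t,\theta_{-s}\omega)$ as you wrote. This mis-statement is what pushes you into the unnecessary substitution $\omega\mapsto\theta_t\omega$, and that substitution then produces a wrong limit on the left: $\varphi(t+s,\theta_{t-s}\omega)x=\varphi(t+s,\theta_{-(t+s)}\theta_{2t}\omega)x\to[\mathcal{K}(u)](\theta_{2t}\omega)$, not $[\mathcal{K}(u)](\theta_t\omega)$. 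On the right, with your incorrect identity the outer factor $\varphi(t,\theta_{-s}\theta_t\omega,\cdot)$ has an $s$-dependent $\omega$-slot, and $\theta_{t-s}\omega$ does not converge to anything; this is the only reason you feel compelled to invoke uniform-on-bounded-sets convergence, which even if available would not make the limit be $\varphi(t,\omega)[\mathcal{K}(u)](\omega)$ as claimed. In short, the ``main obstacle'' you anticipate is an artifact of the mis-stated cocycle identity and disappears once it is corrected.

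The correct argument is shorter and needs no substitution and no uniformity: for fixed $t>0$ and $x$, by continuity of $\varphi(t,\omega,\cdot)$ (part of the RDS definition; pointwise continuity at the single limit point $[\mathcal{K}(u)](\omega)$ is all that is needed, since the map does not vary with $s$),
\[
\varphi(t,\omega)[\mathcal{K}(u)](\omega)=\varphi(t,\omega)\lim_{s\to\infty}\varphi(s,\theta_{-s}\omega)x=\lim_{s\to\infty}\varphi(t+s,\theta_{-s}\omega)x=\lim_{s\to\infty}\varphi(t+s,\theta_{-(t+s)}\theta_t\omega)x=[\mathcal{K}(u)](\theta_t\omega),
\]
where the last equality is \eqref{eq17} applied at $\theta_t\omega$ (the exceptional null set transported by $\theta_{-t}$ is still null because $\theta_t$ is measure-preserving, or one invokes Remark~2 for an everywhere-defined version). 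Your secondary worry — that $a_\tau^h,b_\tau^h\in\mathcal{M}_{\mathscr{F}}^b$ — is indeed exactly what Proposition~\ref{pro1} and $h(\mathbb{R}^d)\subseteq[0,N]$ supply, and you handled that correctly.
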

\begin{proof}
In view of Lemma \ref{lem4}, regardless of the monotonicity or anti-monotonicity for $h$, for fixed $\tau\geq0$, we have
\begin{equation}\label{eq18}(\mathcal{K}^h)^{2k}(a_\tau^h)\leq\theta-\underline{\lim}\;h(\varphi)\leq \theta-\overline{\lim}\;h(\varphi)\leq(\mathcal{K}^h)^{2k}(b_\tau^h),\quad\mathbb{P}\mbox{-a.s.},\ k\in\mathbb{N},
\end{equation}
where $a_\tau^h$ and $b_\tau^h$ are defined in Proposition \ref{pro1}. By Proposition \ref{pro1}, $a_\tau^h$ and $b_\tau^h$ are bounded $\mathscr{F}$-measurable functions in $(\mathcal{M}_{\mathscr{F}}^b,\rho)$. Since $\mathcal{K}^h$ is a contraction mapping on the complete metric space $\mathcal{M}_{\mathscr{F}}^b$, by the Banach fixed point theorem \cite{Y}, there exists a unique nonnegative
random variable $u:\Omega\rightarrow[0,N]$ for $\mathcal{K}^h$ such that
\[[\mathcal{K}^h(u)](\omega)=u(\omega)\quad{\rm for\ all}\ \omega\in\Omega,\]
and
\begin{equation}\label{eq19}\lim_{k\rightarrow\infty}[(\mathcal{K}^h)^{2k}(a_\tau^h)](\omega)=u(\omega)=\lim_{k\rightarrow\infty}[(\mathcal{K}^h)^{2k}(b_\tau^h)](\omega)\quad {\rm for\ all}\ \omega\in\Omega.
\end{equation}
Moreover, $u$ is independent of $x\in\mathbb{R}^d$ evidently. Combining (\ref{eq18}) and (\ref{eq19}), we can have
\[[\theta-\underline{\lim}\;h(\varphi)](\omega)=[\theta-\overline{\lim}\;h(\varphi)](\omega)=u(\omega)\quad \mathbb{P}\mbox{-a.s.}\]
which together with Lemma \ref{lem2} implies that
\[[\theta-\underline{\lim}\;\varphi](\omega)=[\theta-\overline{\lim}\;\varphi](\omega)=[\mathcal{K}(u)](\omega)\quad \mathbb{P}\mbox{-a.s.}\]
In order to  prove (\ref{eq17}), it remains to show that
\begin{equation}\label{eq20}[\theta-\underline{\lim}\;\varphi](\omega)=[\theta-\overline{\lim}\;\varphi](\omega)=\lim_{t\rightarrow\infty}\varphi(t,\theta_{-t}\omega)x,\quad \mathbb{P}\mbox{-a.s.},\ x\in\mathbb{R}^d.
\end{equation}
By definitions of infimum and supremum, it is clear that
\[\inf\{\varphi(t,\theta_{-t}\omega)x:t\geq\tau\}\leq\varphi(\tau,\theta_{-\tau}\omega)x\leq\sup\{\varphi(t,\theta_{-t}\omega)x:t\geq\tau\}, \quad \mathbb{P}\mbox{-a.s.},\ x\in\mathbb{R}^d.\]
Let $\tau\rightarrow\infty$ in the above inequality, then (\ref{eq20}) holds and so (\ref{eq17}) holds.

Furthermore, by (\ref{eq17}) and the property of continuity and cocycle for $(\theta,\varphi)$, one can show that for fixed $t>0$ and $x\in\mathbb{R}^d$, then
\begin{eqnarray}
\varphi(t,\omega)[\mathcal{K}(u)](\omega)
&=&
\varphi(t,\omega)\lim_{s\rightarrow\infty}\varphi(s,\theta_{-s}\omega)x\nonumber\\
&=& \lim_{s\rightarrow\infty}\varphi(t,\omega)\circ\varphi(s,\theta_{-s}\omega)x\nonumber\\
&=& \lim_{s\rightarrow\infty}\varphi(t+s,\theta_{-s}\omega)x\nonumber\\
&=& \lim_{s\rightarrow\infty}\varphi(t+s,\theta_{-(t+s)}\circ\theta_t\omega)x\nonumber\\
&=& [\mathcal{K}(u)](\theta_t\omega)\quad \mathbb{P}\mbox{-a.s.}
\nonumber
\end{eqnarray}
The proof is complete.
\qquad\end{proof}

\textsc{{\it Remark}} 2.
Since $(t,x)\mapsto\varphi(t,\theta_{-t}\omega)x$ is a continuous mapping from $\mathbb{R}_+\times\mathbb{R}^d$ into $\mathbb{R}^d$ {\rm \cite[Remark 1.5.1, p.34]{C}}, we can have an indistinguished version of the stochastic process $\{\varphi(t,\theta_{-t}\omega)x,\ t\in\mathbb{R}_+\}$ for fixed $x\in\mathbb{R}^d$, which leads that all conclusions presented in this paper would hold for all $\omega\in\Omega$.

\textsc{{\it Remark}} 3.
It is worth pointing out that we can not obtain our main results directly from Theorem 4.4 in \cite{FS3}. To be more
precise, the perfection of the crude cocycle $\varphi(t,\omega,x,u)$ is not easy and the small-gain condition (Definition 4.2 in \cite{FS3}) meets some difficulties for stochastic systems. In fact, the key role of the small-gain condition is to guarantee the existence on the unique, globally attracting fixed point of $\mathcal{K}^h$. In this paper, motivated by this thought, we directly consider the existence and uniqueness of globally attracting fixed points by using the Banach fixed point theorem. It is worth to note that the idea established in this paper has a potential to apply to other stochastic systems with feedback interconnections, for example, to those driven by multiplicative white noise. However, specific measurability problems, which need to be overcome,  appear during realizing this idea, see \cite{JL}.

\section{Examples}

In this section, we present several examples to illustrate the use of our small-gain theorem. Note that our result can be applied to stochastic cooperative, competitive and predator-prey systems, or to even others. As far as the authors know, there has been no criterion to guarantee them to be globally stable so far. Although we can construct many higher dimensional stochastic systems, we only give three dimensional systems in the following.

{\it Example} 5.1. Consider {\it stochastic cooperative system}:
\begin{equation}\label{EX1.1}dx_i=[(Ax)_i+h_i(x_i)]dt+\sigma_idW_t^i,\qquad i=1,2,3,\end{equation}
where
\begin{equation}\label{EX1.2}
A=\left[\begin{array}{ccc} -1&1&0\\
 1&-2&0\\
0&1&-1\end{array}\right]
\end{equation}
with three eigenvalues $\lambda_1=-1, \lambda_{2,3}=\frac{-3\pm\sqrt5}{2}$
and \begin{equation}\label{EX1.3}h_i(x_i):=\frac{1}{6+g_i(x_i)},\qquad i=1,2,3,\end{equation}
 where $g_i(x_i)=\frac\pi2-\arctan x_i$ is decreasing with respect to $x_i,\ i=1,2,3.$ It is clear that (\ref{EX1.1}) is a cooperative system.
By direct calculation, we obtain
\begin{equation}\label{EX1.4}
\Phi(t)=\left[\begin{array}{ccc}
\frac{5+\sqrt5}{10}{\rm e}^{\frac{-3+\sqrt5}{2}t}+\frac{5-\sqrt5}{10}{\rm e}^{\frac{-3-\sqrt5}{2}t}&\frac{\sqrt5}{5}{\rm e}^{\frac{-3+\sqrt5}{2}t}-\frac{\sqrt5}{5}{\rm e}^{\frac{-3-\sqrt5}{2}t}&0\\
\frac{\sqrt5}{5}{\rm e}^{\frac{-3+\sqrt5}{2}t}-\frac{\sqrt5}{5}{\rm e}^{\frac{-3-\sqrt5}{2}t}&\frac{5-\sqrt5}{10}{\rm e}^{\frac{-3+\sqrt5}{2}t}+\frac{5+\sqrt5}{10}{\rm e}^{\frac{-3-\sqrt5}{2}t}&0\\
-{\rm e}^{-t}+\frac{5+\sqrt5}{10}{\rm e}^{\frac{-3+\sqrt5}{2}t}+\frac{5-\sqrt5}{10}{\rm e}^{\frac{-3-\sqrt5}{2}t}&\frac{\sqrt5}{5}{\rm e}^{\frac{-3+\sqrt5}{2}t}-\frac{\sqrt5}{5}{\rm e}^{\frac{-3-\sqrt5}{2}t}&{\rm e}^{-t}\end{array}\right].\nonumber
\end{equation}
It is not difficult to estimate that for any $t\geq0$,
\[\|\Phi(t)\|:=\max\{|\Phi_{ij}(t)|:i,j=1,2,3\}\leq{\rm e}^{\lambda_2 t},\]
that is, (\ref{Fun}) holds.
Moreover, it is easy to see $\max\limits_{1\leq i\leq 3}{\rm Re} \lambda_i=\lambda_2<0,\ L\leq\frac{1}{36}$. So
\[-\frac{9L}{\lambda_2}\leq\frac{1}{2(3-\sqrt{5})}<1.\]
By the small-gain Theorem \ref {thm1},  stochastic cooperative system (\ref{EX1.1}) possesses a unique globally asymptotically stable random equilibrium, that is, (\ref{EX1.1}) has a unique stationary solution, which is globally attractive in pull-back trajectories.

The same conclusion holds if we replace $g_i(x_{i})$ by $g_i(x_1+x_2+x_3)$.

{\it Example} 5.2.  Consider {\it stochastic competitive system}:
\begin{equation}\label{EX2.1}dx_i=[a_ix_i+h_i(x_{i-1})]dt+\sigma_idW_t^i,\qquad i=1,2,3,\end{equation}
where $a_1=-1, a_2=-2, a_3=-3$ ($x_0=x_3$) and
\begin{equation}\label{EX2.2}h_i(x_{i-1})=\frac{1}{5+{\rm th}\ x_{i-1}}:=\frac{1}{4+g_i(x_{i-1})},\qquad i=1,2,3.\end{equation}
Since $g_i(x_{i-1})=1+{\rm th}\ x_{i-1}=1+\frac{e^{x_{i-1}}-e^{-x_{i-1}}}{e^{x_{i-1}}+e^{-x_{i-1}}}$ is increasing with respect to $x_{i-1},\ i=1,2,3,$ (\ref{EX2.1}) is a stochastic competitive  biochemical circuit. Furthermore, by (\ref{EX2.1}) and
(\ref{EX2.2}),  let $\lambda=-1$ and $L\leq\frac{1}{16}$, it follows that
\[\|\Phi(t)\|:=\max\{|\Phi_{ij}(t)|:i,j=1,2,3\}={\rm e}^{-t},\ t\geq0,\]
and
\[-\frac{9L}{\lambda}\leq\frac{9}{16}<1.\]
Applying the small-gain Theorem \ref {thm1}, stochastic competitive system (\ref{EX2.1}) admits a unique globally asymptotically stable random equilibrium, which produces an ergodic stationary solution for (\ref{EX2.1}).

Note that the same result is true when $g_i(x_{i-1})$ is replaced by $g_i(x_1+x_2+x_3)$.

{\it Example} 5.3. Consider {\it stochastic predator-prey system}:
\begin{equation}\label{EX3.1}dx_i=[(Ax)_i+h_i(x_{i-1})]dt+\sigma_idW_t^i,\qquad i=1,2,3,\end{equation}
where $x_0=x_3$, $x_4=x_1$ and
\begin{equation}\label{Predator-Prey}
A=\left[\begin{array}{ccc} -1&\sqrt[3]2&0\\
 0&-2&\sqrt[3]2\\
\sqrt[3]2 &0&-4\end{array}\right]
\end{equation}
with three eigenvalues $\lambda_1=-3, \lambda_{2,3}=-2\pm\sqrt2$ and
\begin{equation}\label{EX3.2}h_i(x_{i-1})=\frac{1}{4+\frac\pi2+\arctan x_{i-1}}:=\frac{1}{4+g_i(x_{i-1})},\qquad i=1,2,3.\end{equation}
Let $f(x)=Ax+h(x), x\in\mathbb{R}^3$. Then  $\frac{\partial f_i}{\partial x_{i+1}}(x)=\sqrt[3]2>0$ and $\frac{\partial f_{i+1}}{\partial x_i}(x)=-\frac{1}{(4+\frac\pi2+\arctan x_{i})^2}\cdot\frac{1}{1+x_i^2}<0$, for $i=1,2,3$, which implies that (\ref{EX3.1}) is a stochastic predator-prey system.
Direct calculation of $\Phi$ shows that
\begin{eqnarray}\label{EX2.3}
\Phi(t)=\left[\begin{array}{ccc}
{\rm e}^{-3t}+\frac{\sqrt2}{2}{\rm e}^{(-2+\sqrt2)t}-\frac{\sqrt2}{2}{\rm e}^{-(2+\sqrt2)t}\\
-\sqrt[3]4{\rm e}^{-3t}+\frac{\sqrt[3]4-\sqrt[6]2}{2}{\rm e}^{(-2+\sqrt2)t}+\frac{\sqrt[3]4+\sqrt[6]2}{2}{\rm e}^{-(2+\sqrt2)t}\\
\sqrt[3]2{\rm e}^{-3t}+\frac{\sqrt[6]{2^5}-\sqrt[3]2}{2}{\rm e}^{(-2+\sqrt2)t}-\frac{\sqrt[6]{2^5}+\sqrt[3]2}{2}{\rm e}^{-(2+\sqrt2)t}\end{array}\right.\nonumber\\
\left.\begin{array}{ccc}
-\sqrt[3]2{\rm e}^{-3t}+\frac{\sqrt[3]2}{2}{\rm e}^{(-2+\sqrt2)t}+\frac{\sqrt[3]2}{2}{\rm e}^{-(2+\sqrt2)t}\\
2{\rm e}^{-3t}+\frac{\sqrt2-1}{2}{\rm e}^{(-2+\sqrt2)t}-\frac{\sqrt2+1}{2}{\rm e}^{-(2+\sqrt2)t}\\
-\sqrt[3]4{\rm e}^{-3t}+\frac{\sqrt[3]4-\sqrt[6]2}{2}{\rm e}^{(-2+\sqrt2)t}+\frac{\sqrt[3]4+\sqrt[6]2}{2}{\rm e}^{-(2+\sqrt2)t}\nonumber
\end{array}\right.\nonumber\\
\left.\begin{array}{ccc}
-\sqrt[3]4{\rm e}^{-3t}+\frac{\sqrt[3]4-\sqrt[6]2}{2}{\rm e}^{(-2+\sqrt2)t}+\frac{\sqrt[3]4+\sqrt[6]2}{2}{\rm e}^{-(2+\sqrt2)t}\\
2\sqrt[3]2{\rm e}^{-3t}+(\frac{3\sqrt[6]{2^5}}{4}-\sqrt[3]2){\rm e}^{(-2+\sqrt2)t}-(\frac{3\sqrt[6]{2^5}}{4}+\sqrt[3]2){\rm e}^{-(2+\sqrt2)t}\\
-2{\rm e}^{-3t}+(\frac32-\sqrt2){\rm e}^{(-2+\sqrt2)t}+(\frac32+\sqrt2){\rm e}^{-(2+\sqrt2)t}\nonumber
\end{array}\right].\\ \nonumber
\end{eqnarray}
and it is not difficult to prove that for any $t\geq0$,
\[\|\Phi(t)\|:=\max\{|\Phi_{ij}(t)|:i,j=1,2,3\}\leq{\rm e}^{(-2+\sqrt2)t}={\rm e}^{\lambda_2 t}.\]
Specifically, since $\Phi_{ij}(t)\geq0, i,j=1,2,3$ for any $t\geq0$, due to the cooperativity of $A$. It is enough to show that
$\Psi_{ij}(t):=\frac{\Phi_{ij}(t)}{{\rm e}^{(-2+\sqrt2)t}}\leq1, i,j=1,2,3$ for any $t\geq0$. More precisely, for $\Psi_{31}(t)$, we have
\[\Psi_{31}(t)=\sqrt[3]2{\rm e}^{-(1+\sqrt2)t}+\frac{\sqrt[6]{2^5}-\sqrt[3]2}{2}-\frac{\sqrt[6]{2^5}+\sqrt[3]2}{2}{\rm e}^{-2\sqrt2t},\ t\geq0,\]
and
\[\frac{{\rm d}\Psi_{31}(t)}{{\rm dt}}=-\sqrt[3]2(1+\sqrt2){\rm e}^{-(1+\sqrt2)t}+\sqrt2(\sqrt[6]{2^5}+\sqrt[3]2){\rm e}^{-2\sqrt2t},\ t\geq0,\]
which together with $\Psi^\prime_{31}(0)>0$ implies that there exists a unique local maximum point $t_0>0$ for $\Psi_{31}(t), t\geq0$. In fact, it is clear that $t_0>0$ is also a global maximum point for $\Psi_{31}(t), t\geq0$. By direct calculation, it follows that $t_0>\frac45$ and so
\[\Psi_{31}(t)\leq\Psi_{31}(t_0)\leq\sqrt[3]2{\rm e}^{-(1+\sqrt2)\times \frac45}+\frac{\sqrt[6]{2^5}-\sqrt[3]2}{2}<1,\ t\geq0.\]
Analogously, we can obtain that there exists a unique local maximum point $t_0>0$ for $\Psi_{23}(t), t\geq0$, which implies that $\Psi_{23}(t)\leq1, t\geq0$.
Moreover, for $\Psi_{11}(t)$, we can easily see that
\[\Psi_{11}(t)={\rm e}^{-(1+\sqrt2)t}+\frac{\sqrt2}{2}-\frac{\sqrt2}{2}{\rm e}^{-2\sqrt2t},\ t\geq0,\]
and
\[\frac{{\rm d}\Psi_{11}(t)}{{\rm dt}}=-(1+\sqrt2){\rm e}^{-(1+\sqrt2)t}+2{\rm e}^{-2\sqrt2t}<0,\ t\geq0,\]
which implies that $\Psi_{11}(t)\leq1, t\geq0$. By the same method, we can obtain that the rest elements of $\Psi(t)=(\Psi_{ij}(t))_{d\times d}, t\geq0$ except for $\Psi_{33}(t)$ are monotone with respect to $t$ and consequently smaller than one. Finally, we consider the element $\Psi_{33}(t)$, it is clear that
\[\Psi_{33}(t)=-2{\rm e}^{-(1+\sqrt2)t}+(\frac32-\sqrt2)+(\frac32+\sqrt2){\rm e}^{-2\sqrt2t},\ t\geq0,\]
and
\[\frac{{\rm d}\Psi_{33}(t)}{{\rm dt}}=2(1+\sqrt2){\rm e}^{-(1+\sqrt2)t}-2\sqrt2(\frac32+\sqrt2){\rm e}^{-2\sqrt2t},\ t\geq0,\]
which together with $\Psi^\prime_{33}(0)<0$ implies that there exists a unique local minimum point $t_0>0$ for $\Psi_{33}(t), t\geq0$. It is noticed that
$\Psi_{33}(0)=1$ and $\lim\limits_{t\rightarrow\infty}\Psi_{33}(t)=\frac32-\sqrt2<1$. Then we conclude that $\Psi_{33}(t)\leq1, t\geq0$.
Furthermore, we can choose $L\leq\frac{1}{16}$, $\lambda=-2+\sqrt2$ and so
\[-\frac{9L}{\lambda}\leq\frac{9}{16(2-\sqrt2)}<1.\]
Using the small-gain Theorem \ref {thm1}, there exists a unique globally attractive random equilibrium for stochastic predator-prey system (\ref{EX3.1}).

\section{Conclusion and discussion}
In this paper, we have developed Marcondes de Freitas and Sontag's approach in random differential equations with inputs and outputs to control problem of stochastic differential equations and proved a small-gain theorem of stochastic control under the assumptions that the output function is either order-preserving or anti-order-preserving in the usual vector order and the global Lipschitz constant of the output function is less than the absolute of the negative principal eigenvalue of linear matrix. For the sake of convenience, let $\sigma=\mbox{diag}(\sigma_1,\ldots,\sigma_d)$. This means that the stochastic system (\ref{problem}) has a unique globally attracting stationary solution $[\mathcal{K}(u)](\theta_t\omega)$, whose probability distribution density is the unique stationary solution of Fokker-Planck equation
\begin{equation}\label{FPEQ}
p_t= \frac{1}{2}\sum_{i=1}^d\sigma_i^2\frac{\partial^2p(x,t)}{\partial x_i^2}- {\rm div}((Ax+h(x))p),\ x\in \mathbb{R}^d,\ t > 0,\
p(x, t) \geq 0,\ \int p(x, t)dx = 1.
\end{equation}
Such a stationary solution is ergodic. This reminds us to investigate a small-gain theorem in stationary solution or stationary measure manner for the stochastic system (\ref{problem}). We outline this approach as follows.

Assume that the matrix $A$ is stable with the maximal real part $-\lambda$ of all its eigenvalues and $h$ possesses a global Lipschitz constant $L$ with $L<\lambda-\epsilon_0$ for a sufficiently small $\epsilon_0>0$. According to \cite[Chap.2, Proposition 2.10]{Palis}, without loss of generality, we may assume that $A$ has the Jordan normal form with blocks along the diagonal of the form, i.e.,
\begin{equation}
A=\left[\begin{array}{ccc} A_1& &\\
 &\ddots& \\
& & A_q\end{array}\right]\nonumber
\end{equation}
where $A_i$ is one of the following two type of matrices:
\begin{equation}
\left[\begin{array}{cccc} \lambda_i& & &\\
\varepsilon& \lambda_i& &\\ &\ddots&\ddots&\\
& &\varepsilon& \lambda_i\end{array}\right]\quad
{\rm or}\quad
\left[\begin{array}{cccc} D_i& & &\\
I_\varepsilon&D_i& & \\ &\ddots&\ddots& \\
& & I_\varepsilon& D_i\end{array}\right]
\quad i=1,\ldots,q,\nonumber
\end{equation}
where

\begin{equation}
D_i=\left[\begin{array}{cc} a_i&-b_i\\
b_i&a_i\end{array}\right]\quad
{\rm and}\quad
I_\varepsilon=\left[\begin{array}{cc} \varepsilon&0\\
0&\varepsilon\end{array}\right],\ 0<\varepsilon<\epsilon_0.
\nonumber
\end{equation}
Define the Fokker-Planck operator by

$$
LV=\frac12\sum_{i=1}^d\sigma_i^2\frac{\partial^2V}{\partial x_i^2} + <\nabla V,(Ax+h(x))>.
$$
Here $<\cdot,\cdot>$ denotes the inner product in $\mathbb{R}^d$ and $V\in C^2(\mathbb{R}^d)$. Now let
\begin{equation}\label{V}
V(x):=\frac{1}{2}(x_1^2+x_2^2+\cdots +x_d^2).
\end{equation}
Then
\begin{eqnarray}
LV
&=&
\frac12\sum_{i=1}^d\sigma_i^2+ <x,Ax>+ <x,h(x)-h(0)>+<x,h(0)>\nonumber\\
&\leq & \frac12\sum_{i=1}^d\sigma_i^2 -(\lambda-\epsilon)|x|_2^2 +L|x|_2^2 + |h(0)|_2\cdot|x|_2\nonumber\\
&=& -(\lambda-\epsilon-L)|x|_2^2 + |h(0)|_2\cdot|x|_2+ \frac12\sum_{i=1}^d\sigma_i^2,
\nonumber
\end{eqnarray}
where $0<\epsilon<\epsilon_0$, $\lambda=-\max\limits_{1\leq i\leq q}{\rm Re} \lambda_i>L+\epsilon_0$. This implies that there is an $R$ sufficiently large such that
\begin{equation}\label{NEG}
LV\leq -\frac{1}{2}(\lambda-\epsilon-L)|x|_2^2\ \ {\rm for\ all}\ |x|_2\geq R.
\end{equation}
Combining (\ref{V}), (\ref{NEG}) and the Khasminskii theorem (see \cite[Theorem 4.1, p.108]{KHAS} and \cite[p.1163]{ZHY}), we know that there is a unique stationary solution for (\ref{FPEQ}) under the condition $L<\lambda$, the stationary measure generated by this stationary solution is ergodic (see \cite[Theorem 4.2, p.110]{KHAS})  and the transition probability function weakly converges to the stationary measure as $t$ tends to infinity. This stationary solution plays the same role as $[\mathcal{K}(u)](\theta_t\omega)$ in Theorem \ref{thm1}. As $\max\limits_{1\leq i\leq d}|\sigma_i|\rightarrow 0$, the stationary measure sequence weakly converges to a Dirac measure at the unique equilibrium of the deterministic system without noise. This means that the stationary measure concentrates around the deterministic equilibrium when the noise intensity is small.

{\it Example} 6.1. Consider two-dimensional stochastic system:
\begin{equation}\label{EX4}
    \begin{array}{l}
        \displaystyle dx_1=[-ax_1-bx_2+h_1(x_1,x_2)]dt+\sigma_1dW^1_t,\\
        \noalign{\medskip}
        \displaystyle dx_2=[bx_1-ax_2+h_2(x_1,x_2)]dt+\sigma_2dW^2_t.
    \end{array}
\end{equation}
Here $a$ and $b$ are positive, and $h=(h_1,h_2)^T$ has a global Lipschitz constant $L<a$. The above discussion shows that (\ref{EX4}) admits a unique stationary solution, which is ergodic.

It is noticed that the matrix
\begin{equation}
A=\left[\begin{array}{cc} -a&-b\\
b&-a\end{array}\right]
\nonumber
\end{equation}
is not cooperative. So Theorem \ref{thm1} cannot be applied to (\ref{EX4}). This may be an advantage of this method to investigate small-gain theorem.

Comparing to the result presented in \cite{FS3}, the method used here can be also applied to the problem (random systems) considered in \cite{FS3}. It is remarkable that the boundedness of $g$ is not necessary for us, while the derivatives of $g$ need to be bounded in the present work. For example, in Example 5.1, we can let
\begin{equation}
g_i(x_i)=\left\{\begin{array}{l} \frac\pi2-x_i,\qquad\qquad x_i\leq0;\nonumber\\
\frac\pi2-\arctan x_i,\quad x_i\geq0.\nonumber\end{array}\right.
\end{equation}
It is easy to see that the conclusion of Example 5.1 still hold.

This result is only the first step in study of stochastic stability and a small-gain theorem for stochastic control problem with multiplicative noise will be investigated in the near future, see \cite{JL}.

\section*{Acknowledgement}
The authors are greatly indebted to two anonymous referees for very careful reading our original manuscript and providing lots of very inspiring and helpful comments and suggestions which led to much improvement of the earlier version of this paper.  The authors are also very grateful to the editor Professor Zhang Qing for his valuable suggestions.


\begin{thebibliography}{10}
\bibitem{AA} {\sc D. Angeli and A. Astolfi},
{\em A Tight Small-Gain Theorem for not necessarily ISS Systems}, Systems Control Lett., 56 (2007), pp.~87--91.

\bibitem{ALS} {\sc D. Angeli, P.~D. Leenheer, and E.~D. Sontag},
{\em A Small-Gain Theorem for Almost Global Convergence of
Monotone Systems}, Systems Control Lett., 52 (2004), pp.~407--414.

\bibitem{A} {\sc L. Arnold},
{\em Random dynamical systems}, Springer Monographs in Mathematics, Springer-Verlag, Berlin, 1998.

\bibitem{CWK} {\sc Y.~M. Chen, J.~H. Wu, and T. Krisztin},
{\em Connecting orbits from synchronous periodic solutions in phase-locked periodic solutions in a delay differential system},
J. Differential Equations, 163 (2000), pp.~130--173.

\bibitem{C} {\sc I. Chueshov},
{\em Monotone random systems theory and applications}, Lecture Notes
in Mathematics, vol. 1779, Springer-Verlag, Berlin, 2002.

\bibitem{Co} {\sc D.~L. Cohn},
{\em Measure theory}, Birkh\"{a}user, Boston, Mass, 1980.

\bibitem{DRW} {\sc S. Dashkovskiy, B.~S. R\"{u}ffer, and F.~R. Wirth},
{\em An ISS Small-Gain Theorem for General Networks}, Math. Control
Signals Systems, 19 (2007), pp.~93--122.

\bibitem{DK1} {\sc H. Deng and M. Krsti\'{c}},
{\em Stochastic nonlinear stabilization part I: A
backstepping design}, Systems Control Lett., 32 (1997), pp.~143--150.

\bibitem{DK2} {\sc H. Deng and M. Krsti\'{c}},
{\em Stochastic nonlinear stabilization part II:
Inverse optimality}, Systems Control Lett., 32 (1997), pp.~151--159.

\bibitem{DK3} {\sc H. Deng and M. Krsti\'{c}},
{\em Output-feedback stochastic nonlinear
stabilization}, IEEE Trans. Automat.
Control, 44 (1999), pp.~328--333.

\bibitem{ES} {\sc G.~A. Enciso and E.~D. Sontag},
{\em Global Attractivity, I/O Monotone Small-Gain Theorems, and Biological Delay
Systems}, Discrete Contin. Dyn. Syst., 14 (2006), pp.~549--578.

\bibitem{FS1} {\sc M. Marcondes de Freitas and E.~D. Sontag},
{\em A class of random control systems: Mono-
tonicity and the convergent-input convergent-state property}, Proc. American Control
Conference, 2013, pp.~4564--4569.

\bibitem{FS2}
\sameauthor, {\em Random dynamical systems with inputs, in Nonautonomous Dynamical Systems in the
Life Sciences}, Peter E. Kloeden and Christian Poetzsche, eds., vol. 1202 of Lecture Notes
in Mathematics, Mathematical Biosciences Subseries, Springer, 2014, ch. 2, pp.~41--87.

\bibitem{FS3}
\sameauthor, {\em A small-gain theorem for random dynamical systems with inputs and outputs},
SIAM J. Control Optim., 53 (2015), pp.~2657--2695.

\bibitem{H} {\sc D.~J. Hill},
{\em A generalization of the small-gain theorem for nonlinear feedback systems}, Automatica, 27 (1991),
pp.~1043--1045.

\bibitem{HJ} {\sc J.~J. Hopfield},
{\em Neurons with graded response have collective computational properties like
two-stage neurons}, Proc. Nat. Acad. Sci., 81 (1984), pp.~3088--3092.

\bibitem{JL} {\sc J.~F. Jiang and X. Lv},
{\em Small-gain theorems for nonlinear stochastic systems with inputs and outputs II: Multiplicative white noise case},
in preparation.

\bibitem{JM} {\sc Z.~P. Jiang and I.~M.~Y. Mareels},
{\em A small-gain control method for nonlinear cascaded systems with
dynamic uncertainties}, IEEE Trans. Automat.
Control, 42 (1997), pp.~292--308.

\bibitem{JMW} {\sc Z.~P. Jiang, I.~M.~Y. Mareels, and Y. Wang},
{\em A Lyapunov formulation of nonlinear small gain theorem for
interconnected ISS systems}, Automatica, 32 (1996),
pp.~1211--1215.

\bibitem{JTP} {\sc Z.~P. Jiang, A.~R. Teel, and L. Praly},
{\em Small-gain theorem for ISS systems and applications}, Math. Control
Signals Systems, 7 (1994), pp.~95--120.

\bibitem{KHAS} {\sc R. Khasminskii},
{\em Stochastic Stability of Differential Equations}, Springer-Verlag, Berlin, 2011.

\bibitem{KD} {\sc M. Krsti\'{c} and H. Deng},
{\em Stabilization of Uncertain Nonlinear Systems},
Springer-Verlag, London, 1998.

\bibitem{KKK} {\sc M. Krsti\'{c}, I. Kanellakopoulos, and P.~V. Kokotovi\'{c}},
{\em Nonlinear and Adaptive Control Design},
Wiley, New York, 1995.

\bibitem{KS} {\sc I. Karatzas and S.~E. Shreve},
{\em Brownian Motion and Stochastic Calculus
}, Graduate Texts in Mathematics, Springer, New York, 1988.

\bibitem{LN} {\sc D.~S. Laila and D. Ne\v{s}i\'{c}},
{\em Lyapunov based small-gain theorem for parameterized
discrete-time interconnected ISS systems}, IEEE Trans. Automat.
Control, 48 (2003), pp.~1783--1788.

\bibitem{M} {\sc X.~R. Mao},
{\em Stochastic differential equations and applications}, Horwood, Chichester, 1997.

\bibitem{MH} {\sc I.~M.~Y. Mareels and D.~J. Hill},
{\em Monotone stability of nonlinear feedback systems}, J. Math. Systems Estim. Control, 2 (1992), pp.~275--291.

\bibitem{NT} {\sc D. Ne\v{s}i\'{c} and A.~R. Teel},
{\em Changing supply functions in input to state
stable systems: The discrete-time case}, IEEE Trans. Automat.
Control, 46 (2001), pp.~960--962.

\bibitem{O} {\sc B. {\O}ksendal},
{\em Stochastic Differential Equations: An Introduction with Applications}, 5th ed., Springer-Verlag, Berlin, 1998.

\bibitem{PB} {\sc Z.~G. Pan and T. Bassar},
{\em Backstepping controller design for nonlinear
stochastic systems under a risk-sensitive cost criterion}, SIAM J. Control Optim., 37 (1999), pp.~957--995.

\bibitem{Palis} {\sc J. Palis and W. Melo},
{\em Geometric Theory of Dynamical Systems, An Introduction}, Springer-Verlag, New York, 1982.

\bibitem{Sm} {\sc H.~L. Smith},
{\em Monotone dynamical systems: An introduction to the theory of competitive and
cooperative systems}, Amer. Math. Soc., 1995.

\bibitem{S} {\sc E.~D. Sontag},
{\em Smooth stabilization implies coprime factorization}, IEEE Trans. Automat.
Control, 34 (1989), pp.~435--443.

\bibitem{SI} {\sc E.~D. Sontag and B. Ingalls},
{\em A Small-Gain Theorem with Applications to Input/Output Systems, Incremental
Stability, Detectability, and Interconnections}, J. Franklin Inst., 339 (2002), pp.~211--229.

\bibitem{ST} {\sc E.~D. Sontag and A.~R. Teel},
{\em Changing supply functions in input/state stable
systems}, IEEE Trans. Automat.
Control, 40 (1995), pp.~1476--1478.

\bibitem{Y} {\sc K. Yoshida},
{\em Functional Analysis},  6th ed., Springer, New York, 1980.

\bibitem{Z} {\sc G. Zames},
{\em On the input-output stability of time-varying nonlinear feedback systems. Part I:
Conditions using concepts of loop gain, conicity, and positivity}, IEEE Trans. Automat. Control, 11
(1966), pp.~228--238.

\bibitem {ZHY} {\sc C. Zhu and  G. Yin},
{\em Asymptotic properties of hybrid diffusion systems}, SIAM J. Control Optim., 46 (2007), pp.~1155--1179.

\end{thebibliography}
\end{document}